\theoremstyle{plain}
\newtheorem{thm}{Theorem}[section]
\newtheorem{lem}[thm]{Lemma}
\newtheorem{pro}[thm]{Proposition}
\newtheorem{cor}[thm]{Corollary}
\theoremstyle{definition}
\newcommand{\Z}{\mathbb{Z}}
\DeclareMathOperator{\Ext}{Ext}
\DeclareMathOperator*{\rightoverright}{\parbox{2em}{\centerline{$\longrightarrow$}\vskip -6pt\centerline{$\longrightarrow$}}}
\begin{document}
\title{$\Sigma$-pure-injective modules for string algebras and linear relations}
\author{Raphael Bennett-Tennenhaus and William Crawley-Boevey}

\address{Fakult\"at f\"ur Mathematik, Universit\"at Bielefeld, 33501 Bielefeld, Germany}
\email{rbennett@math.uni-bielefeld.de, wcrawley@math.uni-bielefeld.de}

\subjclass[2010]{Primary 16D70}
%16  (1959-now) Associative rings and algebras [For the commutative case, see 13-XX]
%16D  (1991-now) Modules, bimodules and ideals
%16D70  (1991-now) Structure and classification (except as in 16Gxx), direct sum decomposition, cancellation

\keywords{String algebra, Linear relation, Pure-injective module}

\thanks{Both authors are supported by the Alexander von Humboldt Foundation 
in the framework of an Alexander von Humboldt Professorship 
endowed by the German Federal Ministry of Education and Research.}

\begin{abstract}
We prove that indecomposable $\Sigma$-pure-injective modules for a string algebra are string or band modules.
The key step in our proof is a splitting result for infinite-dimensional linear relations.
\end{abstract}
\maketitle

\section{Introduction}

A \emph{string algebra} is one of the form $\Lambda=KQ/(\rho)$
where $K$ is a field, $Q$ is a quiver, $KQ$ is the path
algebra, and $(\rho)$ denotes
the ideal generated by a set $\rho$ of paths of length at least 2, satisfying
\begin{itemize}
\item[(a)]
any vertex of $Q$ is the head of at most two arrows and the tail
of at most two arrows, and 
\item[(b)]
given any arrow $y$ in $Q$, there is at most one path $xy$
of length $2$ with $xy\notin\rho$ and at most one path $yz$ of
length $2$ with $yz\notin\rho$.
\end{itemize}
For simplicity we suppose that $Q$ has only finitely many vertices (so is finite), so that the
algebra $\Lambda$ has a unit element.

It is well-known that the finite-dimensional indecomposable modules for a string algebra 
are classified in terms of strings and bands, see for example \cite{BR,CB}.
It is also interesting to study infinite-dimensional modules, especially pure-injective 
modules, see \cite{R3,PP,PP2}. 
In this paper we classify indecomposable $\Sigma$-pure-injective modules for string algebras. 
Recall that a module is said to be \emph{pure-injective} or \emph{algebraically compact} if it is injective 
with respect to pure-exact sequences (where an exact sequence is \emph{pure-exact} if it 
remains exact after tensoring with any module). 
A module is \emph{$\Sigma$-pure-injective} if any direct sum of 
copies of it is pure-injective. 
There are many equivalent formulations, see for example \cite[\S 4.4.2]{P}. 
Note that any countable-dimensional pure-injective module is $\Sigma$-pure-injective, 
see \cite[Corollary 4.4.10]{P}.

Associated to a string algebra $\Lambda$ there are certain \emph{words} whose letters are the arrows of $Q$
and their inverses. The words may be finite or (as in \cite{R3,CB}) infinite. 
Associated to such a word $C$ there is a module $M(C)$. (We recall the appropriate definitions in \S\ref{s:string}). 
By a \emph{string module} one means a module $M(C)$ with $C$ not a periodic word. If $C$ is periodic, then
$M(C)$ becomes a $\Lambda$-$K[T,T^{-1}]$-bimodule, and given any indecomposable $K[T,T^{-1}]$-module $V$
there is a corresponding \emph{band module} $M(C,V) = M(C)\otimes_{K[T,T^{-1}]} V$.
It is known that string modules are indecomposable, 
and Harland \cite{Har} has given a criterion in terms of a word $C$, for when the string module $M(C)$
is $\Sigma$-pure-injective; for convenience we recall his criterion in \S\ref{s:string}.
Our main result is as follows.

\begin{thm}
\label{t:mainstringtheorem}
Every indecomposable $\Sigma$-pure-injective module
for a string algebra $\Lambda$ is either a string module $M(C)$ or a band module $M(C,V)$
with $V$ a $\Sigma$-pure-injective $K[T,T^{-1}]$-module.
\end{thm}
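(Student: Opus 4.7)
The plan is to adapt the Butler--Ringel functorial-filtration method, used in the finite-dimensional classification of modules over string algebras, to the $\Sigma$-pure-injective setting by replacing partial maps with (potentially infinite-dimensional) linear relations on $M$. First, I would attach to $M$ a system of linear relations built from the arrows of $Q$ and their formal inverses. Composition along a word $C$ yields a relation $r_C$ whose graph describes the ``$C$-directed'' part of $M$, and these relations jointly produce a functorial filtration of $M$ indexed by strings and bands, in the spirit of \cite{BR,CB}.

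The heart of the argument is then the splitting theorem for infinite-dimensional linear relations announced in the abstract. In rough form, for a $\Sigma$-pure-injective $K$-space $V$ equipped with a linear relation $r$, the space $V$ should decompose canonically into summands on which $r$ acts as a two-sided shift, as a one-sided shift, as an ``eventually stationary'' partial map, or as a periodic (and hence $K[T,T^{-1}]$-module-like) relation. Transferred to $M$ and patched together along all words, this should produce a direct-sum decomposition of $M$ into string modules $M(C)$ and band modules $M(C,V')$ with $V'$ a $K[T,T^{-1}]$-module. Since $M$ is indecomposable, exactly one summand survives; in the band case $V'$ is indecomposable and inherits $\Sigma$-pure-injectivity through the band-module functor $M(C)\otimes_{K[T,T^{-1}]}-$, while in the string case Harland's criterion recalled in \S\ref{s:string} confirms the expected $\Sigma$-pure-injectivity pattern.

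The main obstacle is the splitting theorem itself. A linear relation is not a linear map, so neither kernel nor image is functorial in the usual sense, and the classical Fitting-type decomposition available for a single endomorphism does not apply. One must use the $\Sigma$-pure-injectivity of $M$ (equivalently, the descending chain condition on pp-definable subgroups) to construct enough idempotent endomorphisms to realise each of the summands listed above as a genuine direct summand rather than merely a subquotient, and then verify that these idempotents commute and exhaust $M$. Making this precise, and matching the resulting summands to the explicit string and band modules, is where the substantial technical work will lie.
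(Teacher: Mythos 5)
Your overall strategy (functorial filtration plus a splitting result for infinite-dimensional relations) is the right one, but the two places where you locate the work are not carried out in a way that would succeed, and the mechanisms that actually make the argument go through are missing. For the splitting theorem, you propose a Fitting-type decomposition obtained by using the descending chain condition on pp-definable subgroups to manufacture commuting idempotent endomorphisms of $M$. This is not workable as stated: the pieces produced by the functorial filtration, such as $F_{B,D}(M)$ and the quotient $C^\sharp/C^\flat$ for a relation $C$, are subquotients of $M$, not submodules, and there is no canonical way to realise them as images of idempotents of $\End(M)$, let alone commuting ones; moreover you also over-claim the shape of the splitting (a canonical decomposition into two-sided shift, one-sided shift, eventually stationary and periodic parts), whereas what is actually needed and true is only that $C^\sharp=C^\flat\oplus U$ with $(U,C|_U)$ automorphic. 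The route that works is quite different: one shows that $(C^\flat,C|_{C^\flat})$ and $(C^\sharp,C|_{C^\sharp})$ are \emph{pure} submodules of the Kronecker module $(V,C)$ (Theorem~\ref{t:mainrelationthm}, proved by lifting maps from the finitely many types of finite-dimensional indecomposable Kronecker modules and invoking the finite-dimensional splitting result of Crawley-Boevey), and then $\Sigma$-pure-injectivity turns pure submodules into direct summands, splitting the exact sequence of Lemma~\ref{l:flatsharpexact} and yielding Corollary~\ref{c:maincor}; the $\Sigma$-pure-injectivity of $C^\sharp/C^\flat$ over $K[T,T^{-1}]$ is then extracted by a direct argument with countable direct sums and products, not ``inherited through the band-module functor.''

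The second gap is the patching step, which you explicitly defer: from the splitting of the relation $(e_vM,E)$ for each periodic word you must still produce a direct-sum decomposition of $M$ itself, and constructing commuting idempotents ``along all words'' is precisely what cannot be done directly. The proof instead builds an external comparison map $\theta\colon N\to M$, with $N$ a direct sum of string and band modules chosen so that $F_{B,D}(\theta)$ is an isomorphism for all pairs $(B,D)$, following the covering argument of Crawley-Boevey's Theorem~9.2; injectivity of $\theta$ comes from his Lemma~9.4, while surjectivity requires a genuinely new ingredient in the infinite setting, namely that for pure-injective $M$ and an $\mathbb N$-word $C$ one has $C^+(M)=\bigcap_{n\ge 0}C_{\le n}M$ (Lemma~\ref{lem:CMisppdef}, proved via algebraic compactness applied to a chain of cosets of pp-definable subgroups), which feeds into the surjectivity criterion of Corollary~\ref{cor:pressurj}. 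None of this appears in your plan. Finally, Harland's criterion plays no role in the proof of the theorem: it only characterises which string modules are $\Sigma$-pure-injective and is recalled for convenience, so appealing to it ``to confirm the pattern'' in the string case is a red herring.
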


The indecomposable $\Sigma$-pure-injective $K[T,T^{-1}]$-modules are the 
indecomposable finite-dimensional modules,
the \emph{Pr\"ufer} modules, which are the injective envelopes of the simple modules, 
and the function field $K(T)$.
It is easy to see that the corresponding $\Lambda$-modules $M(C,V)$ are also $\Sigma$-pure-injective,
for example using \cite[Theorem 4.4.20(iii)]{P}.
Since any $\Sigma$-pure-injective module is a direct sum of indecomposables, the theorem,
combined with \cite[Theorem 9.1]{CB}, implies that $M(C,V)$ is indecomposable for $V$ indecomposable 
$\Sigma$-pure-injective.

The proof of our theorem uses the functorial filtration method, which
goes back to the classification
of Harish-Chandra modules for the Lorenz group by Gelfand and Ponomarev \cite{GP},
and was used for the classification of finite-dimensional modules 
for string algebras by Butler and Ringel \cite{BR}. 
The method depends on a certain splitting result for finite-dimensional linear relations, 
see \cite[Theorem 3.1]{GP}, \cite[\S 2]{R1} and \cite[\S7]{Gab}.
An extension of this splitting result to some infinite-dimensional 
relations was obtained in \cite[Lemma 4.6]{CB}. 
A key step in the proof of our theorem is the generalization of
this splitting result to the $\Sigma$-pure-injective case, 
which we now explain.

Fix a base field~$K$.
A \emph{linear relation} $(V,C)$ consists of a vector space~$V$ 
and a subspace $C$ of $V\oplus V$.
The category of linear relations has as morphisms
$(V,C)\to (U,D)$ the linear maps $f:V\to U$
with the property that $(f(x),f(y))\in D$ for all $(x,y)\in C$. 
Any linear relation $(V,C)$ defines a Kronecker module
\[
X\rightoverright^{p}_{q} Y
\]
where $X=C$, $Y = V$ and $p$ and $q$ are the first and second projections, 
and in this way the 
category of linear relations is equivalent to the full 
subcategory of the category of Kronecker modules,
consisting of those modules such that the map 
$(\begin{smallmatrix} p\\ q\end{smallmatrix}) : X\to Y^2$ is injective.
Linear relations can be considered as generalizations of linear maps, 
and one defines $Cu = \{ v\in V \colon (u,v)\in C \}$ for $u\in V$ 
and $CU = \bigcup_{u\in U} Cu$ for $U\subseteq V$. 
If $U$ is a subspace of $V$ and $C$ is a relation on $V$, 
then $C|_U$ denotes $C\cap (U\oplus U)$.

Given a linear relation $(V,C)$, we recall \cite[Definition 4.3]{CB} 
that there are subspaces of $V$ defined by
\[
\begin{split}
C^\sharp &= \{ v\in V : \text{$\exists v_n\in V$ for all $n\in \Z$ with $v_{n+1}\in C v_n$ and $v=v_0$}\},
\\
C^\flat &= C_+ + C_-,
\ C_\pm = \{ v\in V : \text{$\exists v_n\in V$ as above 
with $v_n = 0$ for $\pm n\gg0$ }\}.
\end{split}
\]
By \cite[Lemma 4.5]{CB} the quotient $C^{\sharp}/C^{\flat}$ is a $K[T,T^{-1}]$-module with the action of $T$ given by $T(C^{\flat}+v)=C^{\flat}+w$ if and only if $w\in C^{\sharp}\cap(C^{\flat}+Cv)$. 
Using \cite[Lemma 4.6]{CB} we prove the following.

\begin{thm}
\label{t:mainrelationthm}
As Kronecker modules, $(C^\flat,C|_{C^\flat})$ and $(C^\sharp,C|_{C^\sharp})$ are both pure submodules of $(V,C)$.
\end{thm}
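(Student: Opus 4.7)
The plan is to verify purity via the matrix equation criterion: a submodule $N' \subseteq N$ over the (finite-dimensional) Kronecker algebra is pure if and only if every finite system of $K$-linear equations with parameters in $N'$ that has a solution in $N$ has a solution in $N'$. Equivalently, every morphism from a finite-dimensional Kronecker module $F$ to the quotient $N/N'$ lifts to $N$. So I would fix such a finite system with parameters $b_1, \dots, b_r$ in the candidate pure submodule, say $(C^\flat, C|_{C^\flat})$, together with a solution in $(V, C)$.

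Only finitely many elements of $V$ and $C$ are involved. Each $b_i \in C^\flat$ decomposes as $b_i^+ + b_i^-$ with $b_i^\pm \in C_\pm$, and each summand is witnessed by a one-sided trajectory having only finitely many non-zero terms. I would choose a finite-dimensional subspace $U \subseteq V$ and a finite-dimensional $D \subseteq C$ with $p(D), q(D) \subseteq U$, large enough to contain all parameters, solution components, and witnessing trajectories. Then $(U, D)$ is a finite-dimensional sub-Kronecker-module of $(V, C)$ inside which every $b_i$ already lies in $D^\flat$, the $\flat$-subspace computed intrinsically for $(U, D)$.

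Next I would invoke \cite[Lemma 4.6]{CB}, which in the finite-dimensional case gives a Gelfand--Ponomarev-type direct sum decomposition $(U, D) = (D^\flat, D|_{D^\flat}) \oplus (E, D|_E)$. Projecting the solution via the idempotent associated to this splitting preserves the parameters (they lie in $D^\flat$) and lands all the projected solution components inside $D^\flat \subseteq C^\flat$, yielding the desired solution in $(C^\flat, C|_{C^\flat})$.

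For $(C^\sharp, C|_{C^\sharp})$ the scheme is parallel, but the main obstacle is that witnesses for membership in $C^\sharp$ are genuinely bi-infinite and so cannot be absorbed verbatim into a finite-dimensional $U$. The plan here would be either to extract an eventually periodic witness inside a suitably enlarged finite-dimensional subspace (by a pigeonhole argument on the orbit, using that the linear span of the trajectory is forced into a finite-dimensional piece) or to apply \cite[Lemma 4.6]{CB} in its broader infinite-dimensional form to produce a splitting along $C^\sharp$ directly. Either way, the step matching the intrinsic $\sharp$-subspace of the chosen sub-relation to the ambient $C^\sharp$ is the most delicate, and its execution will depend on the exact form of the cited lemma.
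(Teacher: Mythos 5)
The $C^\flat$ half of your plan can be salvaged, but not by citing \cite[Lemma 4.6]{CB}: that lemma (like the classical Gelfand--Ponomarev splitting) decomposes $D^\sharp$ as $D^\flat\oplus E$ with $(E,D|_E)$ automorphic; it does \emph{not} split $(D^\flat,D|_{D^\flat})$ off the whole Kronecker module $(U,D)$. The statement you actually need -- that for a finite-dimensional relation $(U,D)$ the flat part with its restricted relation is a direct summand of $(U,D)$ -- is true, but you must extract it from the classification of finite-dimensional Kronecker modules ($\flat$ vanishes on preprojectives, $0$-regulars, $\infty$-regulars and automorphic regulars, and equals everything on the preinjectives $I_n$, $n\ge 1$); this is essentially the content that the paper packages into Lemmas~\ref{l:nomaps} and~\ref{l:somesplit}. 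With that repair, your compression of a finite system (parameters, a solution, and the finitely supported trajectories witnessing membership in $C^\flat$) into a finite-dimensional subrelation, followed by projection onto its flat summand, does give purity of $(C^\flat,C|_{C^\flat})$; this is a different organisation from the paper's, which never forms finite-dimensional subrelations but instead lifts maps from each type of indecomposable.

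The $C^\sharp$ half, however, is a genuine gap, and neither of your proposed fixes works. The compression strategy fails in principle: take $V$ with basis $(e_n)_{n\in\Z}$ and $C$ the graph of the shift $e_n\mapsto e_{n+1}$; then $e_0\in C^\sharp$, but its unique trajectory has infinite-dimensional span, so there is \emph{no} finite-dimensional subrelation $(U,D)\subseteq (V,C)$ with $e_0\in D^\sharp$, and no pigeonhole argument can produce an eventually periodic witness. Also \cite[Lemma 4.6]{CB} cannot be ``applied in its broader infinite-dimensional form to split along $C^\sharp$'': it splits $C^\flat$ inside $C^\sharp$ under a finiteness hypothesis on $C^\sharp/C^\flat$, and says nothing about splitting, or purity of, $(C^\sharp,C|_{C^\sharp})$ in $(V,C)$. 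The paper avoids trapping witnesses in finite-dimensional pieces altogether: it verifies the lifting criterion type by type, proving $\Ext^1(M,(C^\sharp,C|_{C^\sharp}))=0$ for $M$ preprojective, $0$-regular or $\infty$-regular (Lemma~\ref{l:somesplit}, using $(C|_{C^\sharp})^\sharp=C^\sharp$), and $\Hom(M,(V/C^\sharp,C/C|_{C^\sharp}))=0$ for $M$ preinjective or automorphic regular (Lemma~\ref{l:nomaps}); the automorphic regular case is a nontrivial explicit computation for which your outline has no substitute. (It is only in the remaining case $U=C^\flat$ with $M$ automorphic regular that \cite[Lemma 4.6]{CB} is genuinely invoked, via Lemmas~\ref{l:exactprops} and~\ref{l:flatsharpexact}.)
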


We say that a relation $(V,C)$ is \emph{automorphic} if both projection maps $p,q:C\to V$ are isomorphisms.
The theorem implies our splitting result for linear relations.

\begin{cor}
\label{c:maincor}
If $(V,C)$ is $\Sigma$-pure-injective as a Kronecker module, 
then there is a decomposition
$C^\sharp = C^\flat \oplus U$ 
such that $(U,C|_U)$ is an automorphic relation. Moreover $C^{\sharp}/C^{\flat}$ is a $\Sigma$-pure-injective $K[T,T^{-1}]$-module.
\end{cor}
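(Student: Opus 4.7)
The plan is to deduce the splitting from Theorem \ref{t:mainrelationthm} via pure-injectivity, and then translate the resulting Kronecker decomposition into the statement about $K[T,T^{-1}]$-modules. By Theorem \ref{t:mainrelationthm}, both $(C^\flat, C|_{C^\flat})$ and $(C^\sharp, C|_{C^\sharp})$ are pure in $(V,C)$. Since pure submodules of $\Sigma$-pure-injective modules are again $\Sigma$-pure-injective (taking $I$-fold direct sums preserves pure embeddings, so each $N^{(I)} \hookrightarrow M^{(I)}$ is a split embedding into a pure-injective), both are $\Sigma$-pure-injective as Kronecker modules. By transitivity of purity, $(C^\flat, C|_{C^\flat})$ is pure in $(C^\sharp, C|_{C^\sharp})$; its pure-injectivity then forces this inclusion to split, giving a Kronecker decomposition $(C^\sharp, C|_{C^\sharp}) = (C^\flat, C|_{C^\flat}) \oplus (U, D)$, which unpacks to $C^\sharp = C^\flat \oplus U$ and, by matching components, $D = C|_U$.

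Next I will verify that $(U, C|_U)$ is automorphic by showing the projections $p, q : C|_U \to U$ are bijective. For surjectivity of $q$: given $u \in U \subseteq C^\sharp$, pick $v \in C^\sharp$ with $u \in Cv$, write $v = c + x$ with $c \in C^\flat$, $x \in U$, and observe that in the direct sum $C|_{C^\sharp} = C|_{C^\flat} \oplus C|_U$ the pair $(v, u)$ must split as $(c, 0) + (x, u)$, so $(x, u) \in C|_U$ with $q(x, u) = u$. Surjectivity of $p$ is symmetric. For injectivity, if $(x, 0) \in C|_U$ then $v_0 = x$ and $v_n = 0$ for $n \geq 1$ (extended to negative indices using $x \in C^\sharp$) shows $x \in C_+ \subseteq C^\flat$; analogously $(0, x) \in C|_U$ gives $x \in C_- \subseteq C^\flat$. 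In either case $x \in U \cap C^\flat = 0$.

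For the final claim I will use that the full subcategory of automorphic relations inside Kronecker modules is equivalent to the category of $K[T,T^{-1}]$-modules via $(U, C|_U) \mapsto (U, q \circ p^{-1})$, and that the quotient map $C^\sharp \to C^\sharp/C^\flat$ restricted to $U$ is a $K[T,T^{-1}]$-linear isomorphism, by direct comparison with the $T$-action described in \cite[Lemma 4.5]{CB}. This equivalence preserves direct sums and pure embeddings, and so preserves $\Sigma$-pure-injectivity; since $(U, C|_U)$ is a direct summand of the $\Sigma$-pure-injective Kronecker module $(C^\sharp, C|_{C^\sharp})$, it follows that $C^\sharp/C^\flat$ is $\Sigma$-pure-injective as a $K[T,T^{-1}]$-module. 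The main obstacle is the automorphicity check in paragraph two: confirming that the complement $U$ produced abstractly by pure-injective splitting really meshes with the sequence-based definitions of $C^\sharp$ and $C^\flat$; the surrounding steps are formal once Theorem \ref{t:mainrelationthm} is in hand.
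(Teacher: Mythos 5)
Your overall route is the same as the paper's: by Theorem \ref{t:mainrelationthm} the inclusion of $(C^\flat,C|_{C^\flat})$ into $(C^\sharp,C|_{C^\sharp})$ is pure, pure-injectivity of the smaller module makes it split, and the complement is then an automorphic relation which is identified with $C^\sharp/C^\flat$ as a $K[T,T^{-1}]$-module. Where the paper channels the automorphicity of the complement through Lemmas \ref{l:flatsharpexact} and \ref{l:splitequiv}, you verify by hand that the complement's relation is $C|_U$ and that $p,q:C|_U\to U$ are bijections; that verification is correct (the claim $D=C|_U$ ``by matching components'' does need the one-line observation that the $C^\flat$-components vanish because $U\cap C^\flat=0$, which is exactly the easy direction of Lemma \ref{l:splitequiv}), so this part is just a self-contained re-derivation of those lemmas.

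One justification as written is wrong, although the statement it supports is true: you argue that a pure submodule $N$ of a $\Sigma$-pure-injective $M$ is $\Sigma$-pure-injective because ``$N^{(I)}\hookrightarrow M^{(I)}$ is a split embedding into a pure-injective''. A pure embedding into a pure-injective module does not split in general -- every module is pure in its pure-injective hull, so this inference would force every module to be pure-injective; the splitting requires the \emph{submodule} to be pure-injective, which is what is being proved. The fact you want is standard and should instead be cited or proved via the descending chain condition on pp-definable subgroups, which passes to pure submodules; the paper gets what it needs from \cite[Corollary 4.4.13]{P} (pure submodules of $\Sigma$-pure-injectives are direct summands). With that repair your argument goes through. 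A smaller point on the final step: asserting that the equivalence between automorphic relations and $K[T,T^{-1}]$-modules ``preserves pure embeddings'' is both unproved and more than is needed; as in the paper, it suffices that this embedding is full and commutes with direct sums and products, so that the splitting of the single inclusion $W^{(\mathbb{N})}\subseteq W^{\mathbb{N}}$ transfers from Kronecker modules to $K[T,T^{-1}]$-modules, this splitting being equivalent to $\Sigma$-pure-injectivity.
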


\section{Linear relations}
Products $CD$ and inverses $C^{-1}$ of relations on $V$ are defined by $u\in CDv$ if $u\in Cw$ and $w\in Dv$ for some $w\in V$, and $u\in C^{-1}v$ $\Leftrightarrow$ $v\in Cu$. Recall \cite{CB} that
\[
\begin{split}
C' &= \bigcup_{n=0}^\infty C^n 0, \text{ and}
\\
C'' &= \{v_0 \in V : \text{$\exists v_n \in V$ for $n>0$ with $v_n \in Cv_{n+1}$ for all $n\ge 0$} \}, 
\end{split}
\]
so that $C^\sharp = C'' \cap (C^{-1})''$, $C_+ = C'' \cap (C^{-1})'$, $C_- = (C^{-1})'' \cap C'$.

\begin{lem}
If $(V,C)$ is automorphic, then $C^\flat = 0$ and $C^\sharp = V$.
\end{lem}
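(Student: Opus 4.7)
The plan is to unwind what \emph{automorphic} means in concrete terms, and then to read off both conclusions directly from the definitions of $C^{\sharp}$ and $C^{\flat}$.

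First I would observe that saying the two projections $p,q \colon C \to V$ are isomorphisms is the same as saying that $C$ is the graph of a bijective linear map $T \colon V \to V$: surjectivity and injectivity of $p$ say that for each $u \in V$ the set $Cu$ is a single point, which defines $T$ as a linear map with $Tu \in Cu$; the corresponding statement for $q$ says that $T$ is bijective, with $T^{-1}$ characterised by $C^{-1}$. So $(u,v) \in C$ if and only if $v = Tu$.

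Next, using the sequential description from the introduction, a $\Z$-indexed sequence $(v_n)$ with $v_{n+1} \in Cv_n$ for all $n$ is nothing but $v_{n+1} = T v_n$, i.e.\ $v_n = T^n v_0$. Since $T$ is defined on all of $V$ and invertible, every $v_0 \in V$ extends uniquely to such a bi-infinite sequence, so $C^{\sharp} = V$. For the claim $C^{\flat} = 0$, the same formula $v_n = T^n v_0$ forces that if $v_n = 0$ for some $n$ then $v_0 = 0$; in particular $C_+ = C_- = 0$ and hence $C^{\flat} = C_+ + C_- = 0$.

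Alternatively, one could work from the formulas $C^{\sharp} = C'' \cap (C^{-1})''$, $C_+ = C'' \cap (C^{-1})'$, $C_- = (C^{-1})'' \cap C'$ recalled in Section~2: the sequence $v_n = T^n v_0$ shows $C'' = (C^{-1})'' = V$, while $C^n 0 = \{T^n 0\} = 0$ and similarly $(C^{-1})^n 0 = 0$, so $C' = (C^{-1})' = 0$, which gives the same conclusion. The argument is routine once the graph-of-$T$ reformulation is in place; there is really no obstacle, since everything reduces to the elementary fact that a linear automorphism has trivial kernel and is defined everywhere.
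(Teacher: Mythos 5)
Your proof is correct, and it simply spells out the details of what the paper dismisses with the single word ``Clear'': automorphic means $C$ is the graph of a linear automorphism $T$, so the defining sequences are $v_n = T^n v_0$, giving $C^\sharp = V$ and $C^\flat = 0$ at once. No issues.
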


\begin{proof}
Clear.
\end{proof}

\begin{lem}
\label{l:straightforward}
If $C$ is a relation, then 
$(C|_{C^\flat})^\flat = C^\flat$ and $(C|_{C^\sharp})^\sharp = C^\sharp$.
\end{lem}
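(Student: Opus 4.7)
The plan is to unwind the definitions of $C^\sharp$, $C^\flat = C_+ + C_-$ and $C|_W = C\cap(W\oplus W)$ directly. Each of the two equalities splits into two inclusions, and in both cases one direction is immediate: since $C|_W\subseteq C$ for $W\in\{C^\flat,C^\sharp\}$, any bi-infinite witness sequence for $v$ with respect to $C|_W$ is automatically a witness with respect to $C$, which gives $(C|_{C^\flat})^\flat \subseteq C^\flat$ and $(C|_{C^\sharp})^\sharp\subseteq C^\sharp$.

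For the reverse inclusion $C^\sharp \subseteq (C|_{C^\sharp})^\sharp$, I take $v\in C^\sharp$ and a witness $(v_n)_{n\in\Z}$ with $v_{n+1}\in Cv_n$ and $v_0=v$. The key observation is that for every $n$ the shifted sequence $(v_{n+k})_{k\in\Z}$ is itself a witness of the same type, so $v_n\in C^\sharp$ for all $n$. Consequently each pair $(v_n,v_{n+1})$ lies in $C\cap(C^\sharp\oplus C^\sharp)=C|_{C^\sharp}$, and the original sequence witnesses $v\in(C|_{C^\sharp})^\sharp$.

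For $C^\flat \subseteq (C|_{C^\flat})^\flat$, I decompose $v = v^+ + v^-$ with $v^\pm\in C_\pm$ and pick witnesses $(v^\pm_n)_{n\in\Z}$ satisfying $v^\pm_n = 0$ for $\pm n\gg 0$. Shifting each sequence as in the previous paragraph shows $v^\pm_n\in C_\pm\subseteq C^\flat$ for all $n$; this works because the one-sided eventual-vanishing condition is preserved under a shift on the relevant side. Thus each pair $(v^\pm_n,v^\pm_{n+1})$ lies in $C|_{C^\flat}$, so the sequences witness $v^\pm\in(C|_{C^\flat})_\pm$, and therefore $v = v^+ + v^- \in (C|_{C^\flat})^\flat$.

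The only even mildly subtle point is the remark in the last paragraph that the one-sided vanishing defining $C_\pm$ survives shifting, so that every member of a witness sequence for an element of $C_\pm$ again lies in $C_\pm$. Beyond that bookkeeping, the argument is a formal manipulation of the definitions, and I do not anticipate any real obstacle.
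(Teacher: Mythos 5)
Your proof is correct and is exactly the routine verification the paper has in mind: its own proof of this lemma is just the word ``Straightforward,'' and your argument (shift-invariance of witness sequences, so every term of a witness lies in $C^\sharp$, resp.\ $C_\pm\subseteq C^\flat$, making each consecutive pair land in the restricted relation) fills in those details correctly, including the one-sided vanishing point for $C_\pm$.
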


\begin{proof}
Straightforward.
\end{proof}

The category of linear relations inherits an exact structure from the category of Kronecker modules,
in which a sequence of relations
\[
0 \to (V_1,C_1) \xrightarrow{f} (V_2,C_2) \xrightarrow{g} (V_3, C_3)\to 0
\]
is exact provided that 
$0\to V_1\to V_2\to V_3\to 0$ and
$0\to C_1\to C_2\to C_3\to 0$ are exact.

\begin{lem}
\label{l:flatsharpexact}
Given a relation $(V,C)$, there is an exact sequence
\[
0\to (C^\flat,C|_{C^\flat}) \to 
(C^\sharp,C|_{C^\sharp}) \to 
(C^\sharp/C^\flat,(C|_{C^\sharp})/(C|_{C^\flat}))
\to 0
\]
where the third term is automorphic.
\end{lem}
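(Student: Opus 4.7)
The plan splits into two parts: verifying exactness of the displayed sequence in the category of linear relations, and verifying that the third term is automorphic. For exactness, by the inherited exact structure this amounts to exactness of two short sequences of vector spaces---one for the underlying spaces and one for the relation components. The underlying-space sequence
\[
0 \to C^\flat \to C^\sharp \to C^\sharp/C^\flat \to 0
\]
is standard once we note $C^\flat \subseteq C^\sharp$, which holds because $C_\pm \subseteq C^\sharp$ by inspection of the definitions. The relation-component sequence
\[
0 \to C|_{C^\flat} \to C|_{C^\sharp} \to (C|_{C^\sharp})/(C|_{C^\flat}) \to 0
\]
is likewise standard, and the interpretation of the quotient as a subspace of $(C^\sharp/C^\flat) \oplus (C^\sharp/C^\flat)$ uses the identity $C|_{C^\flat} = C|_{C^\sharp} \cap (C^\flat \oplus C^\flat)$. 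The inclusion and quotient are morphisms of relations by construction, so this step is largely bookkeeping.

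For automorphy, the plan is to identify the quotient relation with the graph of the $K[T,T^{-1}]$-action on $C^\sharp/C^\flat$ provided by \cite[Lemma 4.5]{CB} and recalled in the introduction: $T(v + C^\flat) = w + C^\flat$ if and only if $w \in C^\sharp \cap (C^\flat + Cv)$. One direction is immediate: if $(v, w) \in C|_{C^\sharp}$ then $w \in Cv \cap C^\sharp \subseteq (C^\flat + Cv) \cap C^\sharp$, so $T\bar v = \bar w$. For the converse, given $T \bar v = \bar w$, write $w = u + w'$ with $u \in C^\flat$ and $w' \in Cv$; then $w' = w - u \in C^\sharp$ since $C^\flat \subseteq C^\sharp$, so $(v, w') \in C|_{C^\sharp}$ represents $(\bar v, \bar w)$ in the quotient. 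Once the quotient relation is identified with the graph of $T$, both projections onto $C^\sharp/C^\flat$ are isomorphisms---the first is the identity and the second is $T$, which is invertible because $C^\sharp/C^\flat$ is a $K[T,T^{-1}]$-module---so the relation is automorphic.

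The substantive content is carried by \cite[Lemma 4.5]{CB}, which supplies both the single-valuedness of the $T$-action modulo $C^\flat$ and the invertibility of $T$. The main obstacle, absent that lemma, would be to prove these two properties directly: for each $v \in C^\sharp$ there exists $w \in C^\sharp$ with $(v, w) \in C$, any two such $w$ differ by an element of $C^\flat$, and the symmetric statements for the inverse relation. With \cite[Lemma 4.5]{CB} in hand, what remains is the careful matching of the two descriptions of the quotient relation and the bookkeeping for exactness.
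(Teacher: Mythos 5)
Your proof is correct, but it routes the substance through a different citation than the paper does. The paper's proof works directly with the projection $C|_{C^\sharp}\to C^\sharp/C^\flat$: surjectivity comes straight from the defining doubly-infinite sequences for elements of $C^\sharp$, and the kernel is computed to be $C|_{C^\flat}$ using only the identity $C^\sharp\cap CC^\flat=C^\flat$ of \cite[Lemma 4.4]{CB} (the second projection being handled by the evident symmetry under $C\mapsto C^{-1}$). You instead identify the quotient relation with the graph of the $T$-action on $C^\sharp/C^\flat$ and invoke \cite[Lemma 4.5]{CB} for everything nontrivial: that $T$ is everywhere defined and single-valued modulo $C^\flat$ (which is exactly the surjectivity-plus-kernel computation the paper does by hand) and that $T$ is invertible. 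Your two inclusions between the quotient relation and the graph of $T$ are verified correctly, and since the paper itself recalls \cite[Lemma 4.5]{CB} in the introduction there is no circularity within this paper; the trade-off is that your argument rests on the stronger imported result, whereas the paper's argument is essentially self-contained modulo the more elementary \cite[Lemma 4.4]{CB} --- indeed the present lemma is close to being a restatement of \cite[Lemma 4.5]{CB}, so your proof largely transfers the content to the cited source, while the paper's proof reproves it in the form needed. Your graph description of the quotient relation is nonetheless a useful observation, as it makes the link with the $K[T,T^{-1}]$-module structure explicit.
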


\begin{proof}
We need to show that the third term is automorphic.
Consider the map $C|_{C^\sharp}\to C^\sharp/C^\flat$ given by the first projection, say.

The map is onto since by definition any element $v_0$ of $C^\sharp$ belongs to an infinite sequence of elements $v_n\in V$
with $(v_{n+1},v_n)\in C$ for all $n$, and then $(v_0,v_{-1})\in C|_{C^\sharp}$.

The kernel of the map is the set of pairs $(x,y)\in C$ with $x,y\in C^\sharp$ 
and $x\in C^\flat$.
But then $y \in C^\sharp \cap CC^\flat$, and by \cite[Lemma 4.4]{CB} this is equal to $C^\flat$, so
the kernel is $C|_{C^\flat}$.
\end{proof}

A relation $(V,C)$ is said to be \emph{split} provided that
there is a subspace $U$ of $V$ such that $C^\sharp = C^\flat \oplus U$ and $(U,C|_U)$ 
is an automorphic relation \cite[\S 4]{CB}.

\begin{lem}
\label{l:splitequiv}
A relation $(V,C)$ is split if and only if the exact sequence in Lemma~\ref{l:flatsharpexact} is split.
\end{lem}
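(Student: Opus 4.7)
The plan is to convert the two splitting data into each other directly, relying on the automorphicity of the third term of the sequence, which is established in Lemma \ref{l:flatsharpexact}.

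For the forward direction, I would start from $U \subseteq V$ with $C^\sharp = C^\flat \oplus U$ and $(U, C|_U)$ automorphic. The inclusion $U \hookrightarrow C^\sharp$ is a morphism of linear relations, and composing it with the quotient map onto the third term of the sequence yields a morphism
\[
\varphi : (U, C|_U) \longrightarrow \bigl(C^\sharp/C^\flat,\, (C|_{C^\sharp})/(C|_{C^\flat})\bigr).
\]
On vector spaces $\varphi_V: U \to C^\sharp/C^\flat$ is an isomorphism by the direct-sum decomposition. In the commutative square formed by $\varphi_V$, the induced map $\varphi_C$ on the relation parts, and the first projections $p: C|_U \to U$ and $p: (C|_{C^\sharp})/(C|_{C^\flat}) \to C^\sharp/C^\flat$, the two vertical projections are isomorphisms by automorphicity of $(U, C|_U)$ and of the third term, forcing $\varphi_C$ to be an isomorphism as well. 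Hence $\varphi$ is an isomorphism of linear relations, and its inverse composed with the inclusion $U \hookrightarrow C^\sharp$ supplies a section of the exact sequence.

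For the converse, I would take a section $\sigma$ of the sequence and set $U := \sigma_V(C^\sharp/C^\flat) \subseteq C^\sharp$. Since $\sigma_V$ is a section of the quotient $C^\sharp \to C^\sharp/C^\flat$, we get $C^\sharp = C^\flat \oplus U$ directly. The next step is to check that $\sigma$ restricts to an isomorphism of relations from the third term onto $(U, C|_U)$: the map on relation parts, $(\bar x, \bar y) \mapsto (\sigma_V(\bar x), \sigma_V(\bar y))$, lands in $C \cap (U \oplus U) = C|_U$; it is injective because $\sigma_V$ is, and surjective because for $(x, y) \in C|_U$ we have $x = \sigma_V(q(x))$ and $y = \sigma_V(q(y))$ via $q \circ \sigma_V = \mathrm{id}$, so $(x, y)$ is the image of $(q(x), q(y))$. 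The automorphicity of the third term then passes through this isomorphism to $(U, C|_U)$.

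The argument is fundamentally a diagram chase, so I do not anticipate a serious obstacle. The one point that requires some care is verifying that the isomorphism between $(U, C|_U)$ and the third term is compatible with the relation structure, and this is precisely what the automorphicity coming from Lemma \ref{l:flatsharpexact} buys us.
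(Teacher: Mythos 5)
Your proof is correct, but it reaches the nontrivial implication by a different mechanism than the paper. The paper explicitly proves only the direction ``$(V,C)$ split $\Rightarrow$ the sequence splits'', and does so by an element computation: writing $(x,y)\in C|_{C^\sharp}$, decomposing $x=z+u$ with $z\in C^\flat$, $u\in U$, and invoking the identity $C^\sharp\cap CC^\flat=C^\flat$ of \cite[Lemma 4.4]{CB} to conclude $C|_{C^\sharp}=C|_{C^\flat}\oplus C|_U$, so that $(U,C|_U)$ is exhibited as an internal complement of $(C^\flat,C|_{C^\flat})$ inside $(C^\sharp,C|_{C^\sharp})$; the converse is left as immediate. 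You never touch the relation elements directly: you form $\varphi=\pi\circ\iota:(U,C|_U)\to(C^\sharp/C^\flat,(C|_{C^\sharp})/(C|_{C^\flat}))$, note that $\varphi_V$ is bijective from the vector-space decomposition, and let the two automorphicity statements (for $(U,C|_U)$ by hypothesis, for the third term by Lemma~\ref{l:flatsharpexact}) force the component on relation parts to be bijective, so that $\iota\circ\varphi^{-1}$ is a section. This buys a proof that quotes only the statement of Lemma~\ref{l:flatsharpexact} instead of re-running the \cite[Lemma 4.4]{CB} argument, at the cost of not recording the explicit decomposition $C|_{C^\sharp}=C|_{C^\flat}\oplus C|_U$ that the paper's proof produces. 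Your converse direction is also correct and more detailed than what the paper says; the one step you pass over quickly is why $(\bar x,\bar y)\mapsto(\sigma_V(\bar x),\sigma_V(\bar y))$ lands in $C$ at all, and the justification is that this map coincides with the relation-part component $\sigma_C$ of the section: compatibility of $\sigma$ with the two projections, together with the fact that an element of $C$ is determined by its pair of components, identifies $\sigma_C(\overline{(x,y)})$ with $(\sigma_V(\bar x),\sigma_V(\bar y))$, which therefore lies in $C\cap(U\oplus U)=C|_U$ as you claim.
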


\begin{proof}
It suffices to show that if $(V,C)$ is split, then $C|_{C^\sharp} = C|_{C^\flat} \oplus C|_U$, for then
$(U,C|_U)$ is a complement for $(C^\flat,C|_C^\flat)$ as Kronecker modules.
Suppose $(x,y)\in C|_{C^\sharp}$. 
Write $x = z+u$ with $z\in C^\flat$ and $u\in U$. By assumption there is $w\in U$ with $(u,w)\in C$.
Since $C$ is linear, $(z,y-w)\in C$. Thus $y-w \in Cz \subseteq CC^\flat$. But also 
$y-w \in C^\sharp$. Thus $y-w \in C^\flat$ by \cite[Lemma 4.4]{CB}.
Then $(x,y) = (u,w) + (z,y-w) \in C|_U + C|_{C^\flat}$.
\end{proof}

\begin{lem}
\label{l:exactprops}
Consider an exact sequence of relations
\[
0 \to (V_1,C_1) \xrightarrow{f} (V_2,C_2) \xrightarrow{g} (V_3, C_3)\to 0
\]
where we identify $V_1$ as a subspace of $V_2$. Then
\begin{itemize}
\item[(i)]
if $C_1^\sharp = V_1$ and $C_3^\sharp = V_3$ then $C_2^\sharp = V_2$;

\item[(ii)]
if $C_1^\flat = V_1$ and $C_3^\flat = V_3$ then $C_2^\flat = V_2$; and

\item[(iii)]
if $C_1^\flat=V_1$ and $C_3^\flat = 0$ then $C_2^\flat = V_1$.
\end{itemize}
\end{lem}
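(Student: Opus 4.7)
The plan is to treat (iii) by a direct argument and to handle (i) and (ii) via a uniform lifting principle. By exactness, identify $C_1 = C_2 \cap (V_1 \oplus V_1)$. The key preliminary observation is that whenever $V_1 = C_1^\sharp$ or $V_1 = C_1^\flat$, every $u \in V_1$ admits both a $C_1$-successor and a $C_1$-predecessor in $V_1$, so $C_1 V_1 = V_1 = C_1^{-1} V_1$. For the sharp case this is immediate from the bi-infinite witness through $u$; for the flat case, split $u = u_+ + u_-$ and use the witnesses through $u_\pm$. From this I obtain a lifting step: given $\tilde w_n \in V_2$ with $g(\tilde w_n) = w_n$ and $(w_n, w_{n+1}) \in C_3$, pick $(x, y) \in C_2$ with $g(x) = w_n$, $g(y) = w_{n+1}$; since $\tilde w_n - x \in V_1$ there is $\delta \in V_1$ with $(\tilde w_n - x, \delta) \in C_1$, and $\tilde w_{n+1} := y + \delta$ works. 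A symmetric construction lifts backward, given $\tilde w_{n+1}$.

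For (i), given $v \in V_2$, take a bi-infinite $C_3$-witness for $g(v) \in V_3 = C_3^\sharp$ and lift it inductively in both directions starting from $\tilde w_0 = v$, exhibiting $v \in C_2^\sharp$. For (ii), I first prove a sub-claim: every $a \in (C_3)_+$ lifts to some $\tilde a \in (C_2)_+$. Take a witness $(w_n)$ through $a$ with $w_n = 0$ for $n \geq N$; \emph{set} $\tilde w_n := 0$ for all $n \geq N$, and extend backward via the lifting step to obtain $\tilde w_n \in V_2$ for $n < N$, giving a $C_2$-witness for $\tilde a := \tilde w_0$ that vanishes for $n \geq N$. The symmetric statement covers $(C_3)_-$. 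Then, for any $v \in V_2$, write $g(v) = a_+ + a_-$ with $a_\pm \in (C_3)_\pm$, pick lifts $\tilde a_\pm \in (C_2)_\pm$, and observe that $v - \tilde a_+ - \tilde a_- \in V_1 = C_1^\flat \subseteq C_2^\flat$.

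For (iii), the inclusion $V_1 = C_1^\flat \subseteq C_2^\flat$ is immediate since $C_1$-sequences are $C_2$-sequences, while $C_3^\flat = 0$ forces $(C_3)_\pm = 0$, so any $v \in (C_2)_\pm$ maps under $g$ to an element of $(C_3)_\pm = 0$, giving $v \in V_1$. I expect the only real subtlety to be part (ii): one must recognise that the ``eventually zero'' tail can be \emph{imposed} on the lift from the start (by setting $\tilde w_n = 0$ for $n \geq N$ and then extending backward), rather than being recovered by modifying an arbitrary bi-infinite lift.
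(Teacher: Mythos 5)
Your proposal is correct. For parts (i) and (iii) it is essentially the paper's argument: (iii) is the same two-line observation, and in (i) the paper performs your lifting step exactly once, using that $C_2^\sharp=V_2$ is equivalent to every element of $V_2$ having a $C_2$-predecessor and (by symmetry) a successor; the correction is the same as yours, namely lift $(u,g(v))$ to $(x,y)\in C_2$ and adjust by an element of $C_1$ supplied by $C_1^\sharp=V_1$, whereas you iterate the step to build the bi-infinite chain explicitly. The genuine divergence is in (ii). The paper argues by induction on $n$ that $g(v)\in C_3^n0$ forces $v\in C_2^\flat$, absorbing the lifted element by the identity $C^\sharp\cap CC^\flat=C^\flat$ from \cite[Lemma 4.4]{CB} (together with part (i), which guarantees that the lifted element lies in $C_2^\sharp$), and then concludes by symmetry and surjectivity of $g$. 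You instead lift an eventually-zero witness wholesale, prescribing the zero tail and extending backward through $V_1$, using your preliminary observation that $V_1=C_1^\flat$ gives every element of $V_1$ a $C_1$-successor and a $C_1$-predecessor; your final step, writing $g(v)=a_++a_-$ and noting $v-\tilde a_+-\tilde a_-\in V_1\subseteq C_2^\flat$, plays the role of the paper's concluding appeal to surjectivity of $g$. What each buys: the paper's route is shorter because \cite[Lemma 4.4]{CB} is already in constant use elsewhere in the paper, while yours is self-contained (no external lemma, and (ii) does not depend on (i)) and correctly isolates the one subtlety, namely that the zero tail of the lift must be imposed from the start and the chain extended backward, rather than recovered by modifying an arbitrary lift.
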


\begin{proof}
(i) By symmetry, it suffices to show that if $v\in V_2$ then $v\in C_2 V_2$.
By assumption $g(v) \in V_3 = C_3^\sharp$, so $g(v) \in C_3 V_3$.
Thus $(u,g(v))\in C_3$ for some $u\in V_3$.
Since the map $C_2\to C_3$ is onto, there is $(x,y)\in C_2$ with $g(x) = u$ and $g(y) = g(v)$.
Then $g(y-v) = 0$, so we can identify $y-v$ as an element of $V_1 = C_1^\sharp$, so $y-v \in C_1 V_1$, so there is $w\in V_1$ with $(w,y-v)\in C_1$.
But then $(x-w,v) \in C_2$, so $v\in C_2 V_2$, as required.

(ii) We show by induction on $n$ that if $v\in V_2$ and $g(v)\in C_3^n 0$ then $v\in C_2^\flat$.
The result then follows by symmetry, using that $g$ is onto.
If $n=0$ then $g(v)=0$, so $v\in V_1 = C_1^\flat \subseteq C_2^\flat$.
If $n>1$, then $g(v) \in C_2 w$ with $w\in C_2^{n-1} 0$. Now since the map $C_2\to C_3$ is onto,
there is $(x,y)\in C_2$ with $(g(x),g(y)) = (w,g(v))$.
By induction $x\in C_2^\flat$. Then $y\in C_2 x \subseteq C_2 C_2^\flat$, and $y\in C_2^\sharp$,
so $y\in C_2^\flat$. Also $g(v)=g(y)$, so $v-y\in V_1 = C_1^\flat \subseteq C_2^\flat$, so
$v\in C_2^\flat$.

(iii) Clearly $V_1 = C_1^\flat \subseteq C_2^\flat$.
Conversely, if $v\in C_2^\flat$, then $g(v)\in C_3^\flat$, so $g(v)=0$, so $v\in V_1$.
\end{proof}

We recall the classification of Kronecker modules, see for example \cite{Bu}.
If $M$ is a finite-dimensional indecomposable Kronecker module, say of the form
\[
X\rightoverright^p_q Y,
\]
then either it is automorphic regular, meaning that $p$ and $q$ are isomorphisms, 
or $M$ is of one of the following types, where $X$ has basis $(x_i \colon i\in I)$, 
$Y$ has basis $(y_j \colon j\in J)$, $p(x_i) = y_i$ (or 0 if $i\notin J$) 
and $q(x_i) = y_{i+1}$ (or 0 if $i+1\notin J$).

\begin{itemize}
\item[(i)]
Preprojectives $P_n$ ($n\ge 0$): $I=\{1,\dots,n\}$, $J = \{1,\dots,n+1\}$.
\item[(ii)]
Preinjectives $I_n$ ($n\ge 0$): $I=\{0,\dots,n\}$, $J = \{1,\dots,n\}$.
\item[(iii)]
0-Regulars $Z_n$ ($n\ge 1$): $I=\{1,\dots,n\}$, $J=\{1,\dots,n\}$.
\item[(iv)]
$\infty$-Regulars $R_n$ ($n\ge 1$): $I=\{0,\dots,n-1\}$, $J=\{1,\dots,n\}$.
\end{itemize}
Linear relations correspond to Kronecker modules without $I_0$ as a direct summand.

\begin{lem}
\label{l:nomaps}
Let $(V,C)$ be a linear relation, let $U$ be one of the following subspaces of $V$
and let $M$ be a finite-dimensional indecomposable Kronecker module of the indicated type:
\begin{itemize}
\item[(i)] $U=C^{\flat}$ and $M$ is preinjective, or
\item[(ii)] $U=C^{\sharp}$ and $M$ is preinjective, or
\item[(iii)] $U=C^{\sharp}$ and $M$ is automorphic regular.
\end{itemize}
Then there is no non-zero map of Kronecker modules
$\psi:M\rightarrow (V/U,C/C|_{U})$.
\end{lem}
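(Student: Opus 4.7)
The plan is to argue by contradiction: suppose there is a non-zero $\psi : M \to (V/U, C/C|_U)$. First I would note that the map $C/C|_U \to (V/U)\oplus(V/U)$ induced by the two projections is injective (its kernel in $C$ is exactly $C|_U$), so the compatibilities $p\psi_X = \psi_Y p$ and $q\psi_X = \psi_Y q$ force $\psi_X = 0$ whenever $\psi_Y = 0$; hence $\psi_Y \neq 0$. I would then extract concrete data. In cases~(i) and~(ii), with $M = I_n$ and the basis from the excerpt, set $v_0 = v_{n+1} = 0$, $\bar v_i = \psi_Y(y_i)$, and lift $\psi_X(x_i)$ to $(a_i,b_i) \in C$ for $0 \leq i \leq n$; then $a_i \equiv v_i$ and $b_i \equiv v_{i+1} \pmod U$. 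In case~(iii), choose $y_0 \in Y$ with $\psi_Y(y_0) \neq 0$, set $y_k = (qp^{-1})^k y_0$, $x_k = p^{-1}(y_k)$, $\bar v_k = \psi_Y(y_k)$, and produce $(a_k,b_k)$ for every $k \in \Z$. In every case $b_{k-1} - a_k \in U$.

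The technical crux is a one-step extension property: \emph{for every $z \in U$, both $Cz \cap U$ and $C^{-1}z \cap U$ are non-empty.} For $U = C^\sharp$ this is immediate from the definition, by shifting a doubly infinite sequence through $z$. For $U = C^\flat = C_+ + C_-$, decompose $z = z_+ + z_-$; each $z_\pm$ has a successor and predecessor in $C_\pm$ via its witnessing sequence, and the inclusion $Cz_+ + Cz_- \subseteq C(z_+ + z_-)$ (with its inverse analogue) gives the required element in $C^\flat$.

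Using this, I would inductively build a connected path in $C$. In cases~(i), (ii), start with $\tilde a_0 = a_0$ and iterate forward: once $\tilde a_i = \tilde b_{i-1}$ is defined with $z_i := \tilde a_i - a_i \in U$, choose $c_i \in Cz_i \cap U$ and set $\tilde b_i = b_i + c_i$. By linearity of $C$ we get $(\tilde a_i, \tilde b_i) \in C$ and $\tilde b_i \equiv v_{i+1} \pmod U$. In case~(iii), iterate symmetrically in both directions using $C^{-1}z \cap U$ to produce $(\tilde a_k)_{k \in \Z}$ with $(\tilde a_k, \tilde a_{k+1}) \in C$ and $\tilde a_k \equiv v_k \pmod U$ for all $k$.

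To finish: in cases~(i), (ii) the path $a_0, \tilde b_0, \dots, \tilde b_n$ has endpoints in $U \subseteq C^\sharp$, so it extends to a doubly infinite $C$-sequence; in case~(iii) the sequence is already doubly infinite. Hence every element of the (extended) sequence lies in $C^\sharp$. For cases~(ii), (iii) this immediately gives $v_i \in C^\sharp = U$, contradicting $\psi_Y \neq 0$. For case~(i), apply further \cite[Lemma 4.4]{CB}: $\tilde b_0 \in Ca_0 \subseteq CC^\flat$ together with $\tilde b_0 \in C^\sharp$ yields $\tilde b_0 \in CC^\flat \cap C^\sharp = C^\flat$; inductively $\tilde b_i \in C^\flat$, so $v_{i+1} \in C^\flat = U$, again a contradiction. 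The main obstacle is maintaining simultaneously both the path condition $\tilde b_{i-1} = \tilde a_i$ and the congruences $\tilde a_i \equiv v_i \pmod U$ under the modifications; this reduces entirely to the one-step extension property inside $U$, which is the only place where the specific structure of $C^\flat$ versus $C^\sharp$ intervenes.
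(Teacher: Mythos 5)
Your argument is correct: the reduction to $\psi_Y\neq 0$ via the injectivity of $C/C|_U\to (V/U)\oplus(V/U)$, the one-step extension property of $U$ (which for $C^\flat$ follows by splitting $z=z_++z_-$ and using linearity of $C$, and for $C^\sharp$ is immediate), the chain-correction induction, and the final descent into $C^\flat$ via \cite[Lemma 4.4]{CB} all check out, including the degenerate case $M=I_0$ where $Y=0$. For cases (i) and (ii) this is essentially the paper's argument in a different guise: the paper does not modify the lifts into an honest $C$-chain, but instead runs a two-sided induction showing directly that each lifted component lies in $C''$ and in $(C^{-1})''$ (using that these are subspaces containing $U$), hence in $C^\sharp$, and then pushes into $C^\flat$ with the same appeal to \cite[Lemma 4.4]{CB}; the content is the same, only the bookkeeping differs. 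For case (iii), however, your route is genuinely different and simpler than the paper's: the paper works with the matrix $A$ of $qp^{-1}$ in a chosen basis and carries out an elaborate iterative construction of elements $w_i^d$, $u_{i,t}^d$, $z_i^d$ to exhibit a doubly infinite $C$-chain through $w_i-u_i$, whereas you take the orbit $y_k=(qp^{-1})^k y_0$ of a single vector with $\psi_Y(y_0)\neq 0$ and feed it into exactly the same one-step correction machinery as in the preinjective cases, iterating in both directions. What your approach buys is a uniform treatment of all three cases (and in fact it never uses finite-dimensionality of the automorphic regular module, only that $p$ and $q$ are isomorphisms), at the modest cost of having to formulate and verify the one-step extension property of $U$ as a separate lemma; the paper's matrix computation avoids that abstraction but is considerably heavier.
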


\begin{proof}
(i), (ii) For $M=I_n$ the map $\psi$ consists of maps 
$\theta:X\to C/C|_U$
and $\phi:Y\to V/U$, sending
$x_i$ to the coset of $(v_i',v_{i+1}'')\in C$ for $0\le i\le n$
and $y_j$ to the coset of $v_j$ for $1\le j\le n$,
and such that $v_i'-v_i, v_{i+1}-v_{i+1}''\in U$ for $0\le i\le n$,
where $v_0 = v_{n+1} = 0$. Note $U \subseteq (C^{-1})''$.

We claim that all $v_i',v_{i+1}'' \in (C^{-1})''$. This is true for
$v_{n+1}''$; if true for $v_{i+1}''$ it follows for $v_i'$
since $v_i' \in C^{-1}v_{i+1}''$; and if true for $v_i'$ it follows for
$v_i''$ since $v_i'-v_i'' \in U \subseteq (C^{-1})''$.
The claim follows.

Dually, starting with $v_0'$, we see that all $v_i',v_{i+1}'' \in C''$.
Thus all $v_i',v_{i+1}''\in C^\sharp$. If $U=C^{\sharp}$ then $v_j\in U$ for $1\le j\le n$ in which case $\theta=\phi=0$. So we may assume $U=C^{\flat}$. 

Now we claim that all $v_i',v_{i+1}'' \in C^\flat$.
This is true for $v_0'$; 
if true for $v_i'$ it follow for $v_{i+1}''$
since $v_{i+1}'' \in C^\sharp \cap Cv_i' \subseteq C^\sharp \cap CC^\flat \subseteq C^\flat$ by \cite[Lemma 4.4]{CB};
if true for $v_i''$ it follows for $v_i'$ since $v_i'-v_i''\in C^\flat$. Thus $\psi=0$ as above.

(iii) Let $x_1,\dots,x_n$ be a basis for $X$, and so  $y_1,\dots,y_n$ is a basis for $Y$ where $y_i=p(x_i)$. There is an invertible matrix $A=(a_{ij})$ with $a_{ij}\in K$ and $q(x_i)=\sum^n_{j=1}a_{ij}y_j$. The map $\psi$ consists of 
$\theta':X\to C/C|_{C^{\sharp}}$
and $\phi':Y\to V/C^{\sharp}$, sending
$x_i$ to the coset of $(w_i,w_i')$ and $y_i$ to the coset of $w_i''$, such that $w_i-w_i'', w_i'-\sum^n_{j=1}a_{ij}w_j''\in C^{\sharp}$. It suffices to show $w_i\in C^{\sharp}$. 

Note $w_i'-\sum^n_{j=1}a_{ij}w_j \in C^{\sharp}$ since this is the sum of $\sum^n_{j=1}a_{ij}(w_j''-w_j)$ and $w_i'-\sum^n_{j=1}a_{ij}w_j''$. By \cite[Lemma 4.4]{CB} we have $C^{\sharp} \subseteq C^{-1}C^{\sharp}$ and so there is some $u_i\in C^{\sharp}$ for which $(u_i,w_i'-\sum^n_{j=1}a_{ij}w_j)\in C$. Thus we have $(w_i-u_i,\sum^n_{j=1}a_{ij}w_j)\in C$.

Since $u_i\in C^{\sharp}$ there exist $u_{i,t} \in C^{\sharp}$ for $t\in \mathbb{Z}$ such that $u_{i,0}=u_i$ and $u_{i,t} \in Cu_{i,t-1}$ for all $t$. For $1\leq i,j\leq n$ let $a_{ij}^{+}:=a_{ij}$ and let $a_{ij}^{-}$ be the $(i,j)^{\mathrm{th}}$ entry of the matrix $A^{-1}$. We define elements $w_{i}^s,u_{i,t}^s \in V$ iteratively as follows. Let $w^0_i=w_i$ and $u^0_{i,t}=u_{i,t}$, and for $d \geq 1$ let
\[
\begin{array}{cc}
w_{i}^{\pm d}=\sum_{j=1}^{n}a^{\pm}_{ij}w_{j}^{\pm(d-1)} &  u_{i,t}^{\pm d}=\sum_{j=1}^{n}a^{\pm}_{ij}u_{j,t}^{\pm(d-1)}
\end{array}
\]
By construction $(w_i^{d}-u_{i,0}^d,w_i^{d+1})\in C$ when $d=0$. If this true for some $d\geq 0$ then 
\[
\begin{array}{c}
(w_i^{d+1}-u^{d+1}_{i,0},w^{d+2}_i)=\sum_{j=1}^{n}a_{ij}(w_j^{d}-u^{d}_{j,0},w^{d+1}_j)\in C,
\end{array}
\]
hence for all $d\geq 0$ we have $(w_i^{d}-u_{i,0}^d,w_i^{d+1})\in C$. Note that $(u_{i,t}^d,u^d_{i,t+1})\in C$ for all $t\in \mathbb{Z}$.
We claim $(z^d_i,z^{d+1}_i)\in C$ for all $d\geq 0$ where $z_i^0=w_i^0-u_{i,0}^0$, $z_i^1=w_i^1$ and  $z^d_i=w^d_{i}+\sum_{r=1}^{d-1}u_{i,r}^{d-r}$ for $d\geq 2$. For $d=0$ the claim holds by construction. If $(z^{d-1}_i,z^d_i)\in C$ for some $d\geq 1$ then
\[
\begin{array}{c}
z_i^{d+1}=w^{d+1}_{i}+\sum_{r=1}^{d}u_{i,r}^{d+1-r}\in C(w_i^{d}-u_{i,0}^d+\sum_{r=1}^{d}u_{i,r-1}^{d+1-r})
\end{array}
\]
by the above, and as $\sum_{r=2}^{d}u_{i,r-1}^{d+1-r}=\sum_{r=1}^{d-1}u_{i,r}^{d-r}$ this gives $(z^d_i,z^{d+1}_i)\in C$. Now let $z_i^{d}=w_i^d+\sum_{r=d}^{0}u_{i,r}^{r-d}$ for $d\leq1$. As above we have $(z^d_i,z^{d+1}_i)\in C$ for $d\leq 0$, and so altogether we have $z_i^0=w_i-u_i \in C^{\sharp}$, as required. 
\end{proof}

\begin{lem}
\label{l:somesplit}
Let $(V,C)$ be a relation with $V=C^\sharp$, and let $M$ be a finite-dimensional indecomposable preprojective, 0-regular or $\infty$-regular Kronecker module. Then  $\Ext^1(M,(V,C))=0$.
\end{lem}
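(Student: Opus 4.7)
An element of $\Ext^1(M,(V,C))$ is represented by a short exact sequence
\[
0 \to (V,C) \xrightarrow{f} (W,D) \xrightarrow{g} M \to 0
\]
of Kronecker modules; since $M$ is a linear relation, so is $(W,D)$. Identify $V$ with $f(V)\subseteq W$, so that $D\cap(V\oplus V)=C$. The plan is to build a splitting of $g$ by choosing compatible lifts $\tilde y_j\in W$ of each basis vector $y_j$ of $Y_M$; once these are chosen, the splitting is forced on $X_M$ by sending each $x_i$ to the pair in $D$ given by the two adjacent $\tilde y$-values (or by $0$ where $M$ itself has $0$).

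The one nontrivial ingredient is the equality $V=CV=C^{-1}V$, which follows immediately from $V=C^\sharp$: the bi-infinite sequence $(v_n)$ witnessing $v=v_0\in C^\sharp$ gives $v\in Cv_{-1}$ and $v_1\in Cv$ with $v_{\pm 1}\in V$. For each $i\in I$, fix once and for all an arbitrary lift $(p_i,q_i)\in D$ of the basis vector $x_i\in X_M\subseteq Y_M\oplus Y_M$, so $g(p_i)=p(x_i)$ and $g(q_i)=q(x_i)$.

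In case (i), $M=P_n$, take any $\tilde y_1\in g^{-1}(y_1)$ and, inductively for $i=1,\dots,n$, use $p_i-\tilde y_i\in V\subseteq C^{-1}V$ to pick $c_i\in V$ with $(p_i-\tilde y_i,c_i)\in C$, then set $\tilde y_{i+1}:=q_i-c_i$; this gives $(\tilde y_i,\tilde y_{i+1})=(p_i,q_i)-(p_i-\tilde y_i,c_i)\in D$ and $g(\tilde y_{i+1})=y_{i+1}$. In case (ii), $M=Z_n$, the terminal constraint $(\tilde y_n,0)\in D$ is handled first: since $q_n\in V\subseteq CV$, pick $c\in V$ with $(c,-q_n)\in C$ and set $\tilde y_n:=p_n+c$; then propagate backwards for $i=n-1,\dots,1$ by using $\tilde y_{i+1}-q_i\in V\subseteq CV$ to pick $c_i\in V$ with $(c_i,\tilde y_{i+1}-q_i)\in C$ and letting $\tilde y_i:=p_i+c_i$. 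Case (iii), $M=R_n$, is dual to (ii): use $p_0\in V\subseteq C^{-1}V$ to produce $c\in V$ with $(-p_0,c)\in C$, set $\tilde y_1:=q_0+c$ so that $(0,\tilde y_1)\in D$, and propagate forwards as in (i).

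The only real obstacle is keeping the three boundary behaviours straight; the core propagation step is the same throughout and uses nothing beyond the identity $V=CV=C^{-1}V$. Notably, the argument breaks down for preinjective $M$, which matches the assertion of Lemma~\ref{l:nomaps} that those are precisely the indecomposables excluded here.
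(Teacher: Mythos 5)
Your proof is correct and follows essentially the same idea as the paper: any extension splits because $V=C^\sharp$ gives $V=CV=C^{-1}V$, which lets one adjust lifts of the basis vectors of $Y_M$ by elements of $C\subseteq D$ so that all defining pairs land in $D$. The only difference is organizational: the paper first reduces by d\'evissage to the cases $M=R_1$ or $Z_1$ (iterated extensions of $R_1$, $Z_1$ and the projective $P_0$) and does the single lifting step there, whereas you carry out the same lifting step inductively along all of $P_n$, $Z_n$, $R_n$ directly.
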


\begin{proof}
We can reduce to the case $M = R_1$ or $Z_1$, since any
$M$ as listed is an iterated extension of copies of $R_1$ or $Z_1$
and possibly also the projective module $P_0$. 
By symmetry we reduce to $M=R_1$.
Consider an extension
\[
0\to (V,C)\to (W,D) \to M\to 0
\]
and identify $V$ as a subspace of $W$, so $C$ is a subspace of $D$.
Let $w\in W$ and $d = (w',w'')\in D$ be sent to the 
basis elements $y_1$ and $x_1$ in $M$.
Then $w'' - w, w'\in V$.
Now $w' \in Cw'''$ for some $w'''\in V$,
and $W = V\oplus Ku$ where $u = w''-w'''$,
and $D = C \oplus K(u,0)$, giving a splitting of the extension.
\end{proof}

\begin{proof}[Proof of Theorem \ref{t:mainrelationthm}]
Let $U$ be $C^\flat$ or $C^\sharp$. We need to show that any map from a finitely presented, so finite dimensional, 
Kronecker module $M$ to the third term in the exact
sequence
\[
0\to (U, C|_U) \to (V,C) \to (V/U,C/C|_U)\to 0
\]
lifts to a map 
to the middle term. It is enough to let $M$ be indecomposable and  show the pullback sequence 
\[
0\to (U,C|_U) \to 
(W,D) \to 
M
\to 0
\]
is split. By Lemma ~\ref{l:straightforward} we have $(C|_{C^\sharp})^\sharp = C^\sharp$ and $ (C|_{C^\flat})^\sharp = C^\flat$, so if $M$ is preprojective, 0-regular or $\infty$-regular then 
the pullback sequence splits by Lemma~\ref{l:somesplit}. Assume instead that $M$ is preinjective or regular automorphic. There is nothing to prove if there are no non-zero maps $M\rightarrow(V,C)$.
By Lemma~\ref{l:nomaps} this means we can assume that 
$U=C^\flat$ and that $M$ is regular automorphic. Hence $D^\flat = C^\flat$
and $D^\sharp=W$ by Lemma~\ref{l:exactprops}, and thus the pullback sequence 
is the exact sequence of Lemma~\ref{l:flatsharpexact} 
for the relation $(W,D)$.
This splits by \cite[Lemma 4.6]{CB}, since the quotient is finite dimensional.
\end{proof}

\begin{proof}[Proof of Corollary \ref{c:maincor}]
Assume that $(V,C)$ is $\Sigma$-pure-injective as a Kronecker module.
By \cite[Corollary 4.4.13]{P} any pure submodule of it is a direct summand.
In particular, by Theorem \ref{t:mainrelationthm}, this applies to $(C^\flat,C|_{C^\flat})$.
Thus also $(C^\flat,C|_{C^\flat})$ is pure-injective.

Since $(C^\flat,C|_{C^\flat})$ is a pure submodule in $(V,C)$,
it is also pure in $(C^\sharp, C|_{C^\sharp})$,
see for example \cite[Lemma 2.1.12]{P}.
Thus the exact sequence of Lemma~\ref{l:flatsharpexact} splits. By Lemma~\ref{l:splitequiv} we have \[
(C^\flat,C|_{C^\flat})\oplus (C^\sharp/C^\flat,(C|_{C^\sharp})/(C|_{C^\flat})) \cong
(C^\sharp,C|_{C^\sharp})
\]
Since $(C^\sharp,C|_{C^\sharp})$ is a pure submodule of 
the $\Sigma$-pure injective module $(V,C)$, $(C^\sharp,C|_{C^\sharp})$ is $\Sigma$-pure injective, 
hence so is $(C^\sharp/C^\flat,(C|_{C^\sharp})/(C|_{C^\flat}))$.
This means the inclusion of Kronecker modules
\[ 
(C^\sharp/C^\flat,(C|_{C^\sharp})/(C|_{C^\flat}))^{(\mathbb{N})} 
\subseteq (C^\sharp/C^\flat,(C|_{C^\sharp})/(C|_{C^\flat}))^{\mathbb{N}} 
\]
splits. Thus the inclusion $(C^\sharp/C^\flat)^{(\mathbb{N})} \subseteq (C^\sharp/C^\flat)^{\mathbb{N}}$ 
of $K[T,T^{-1}]$-modules splits,
so $C^{\sharp}/C^{\flat}$ is a $\Sigma$-pure-injective $K[T,T^{-1}]$-module.
\end{proof}

\section{String algebras}
\label{s:string}

We recall some notation from \cite{CB}.

\subsection*{Words}
(\cite[\S 1]{CB}) A \textit{letter} is either an arrow $x$ or its
formal inverse $x^{-1}$. Let $I$ be one of the sets $\{0,\dots,n\}$
(for some $n\in\mathbb{N}$), $\mathbb{N}$, $-\mathbb{N}$ or $\mathbb{Z}$.
For $I\neq\{0\}$, an $I$-\textit{word} is a sequence of letters 
\[
C=\begin{cases}
C_{1}\dots C_{n} & (\mbox{if }I=\{0,\dots,n\})\\
C_{1}C_{2}\dots & (\mbox{if }I=\mathbb{N})\\
\dots C_{-1}C_{0} & (\mbox{if }I=-\mathbb{N})\\
\dots C_{-1}C_{0}\mid C_{1}C_{2}\dots & (\mbox{if }I=\mathbb{Z})
\end{cases}
\]
(a bar $\mid$ shows the position of $C_{0}$ and $C_{1}$ when $I=\mathbb{Z}$)
satisfying:
\begin{itemize}
\item[(a)]
if $C_{i}$ and $C_{i+1}$ are consecutive letters, then the tail
of $C_{i}$ is equal to the head of $C_{i}$. 
\item[(b)]
if $C_{i}$ and $C_{i+1}$ are consecutive letters, then $C_{i}^{-1}\neq C_{i+1}$ 
\item[(c)]
no zero relation $x_{1}\dots x_{m}\in\rho$, nor its inverse $x_{m}^{-1}\dots x_{1}^{-1}$
occurs as a sequence of consecutive letters in $C$. 
\end{itemize}
For $I=\{0\}$ there are \textit{trivial words} $1_{v,\epsilon}$ for each
vertex $v$ and each $\epsilon=\pm1$. By a \textit{word} we mean an
$I$-word for some $I$. 

The \textit{inverse} $C^{-1}$
of $C$ is defined by inverting its letters (where $(x^{-1})^{-1}=x$)
and reversing their order. By convention $(1_{v,\epsilon})^{-1}=1_{v,-\epsilon}$,
and the inverse of a $\mathbb{Z}$-word is indexed so that $(\dots C_{0}\mid C_{1}\dots)^{-1}=\dots C_{1}^{-1}\mid C_{0}^{-1}\dots$

If $C$ is a $\mathbb{Z}$-word and $n\in\mathbb{Z}$, the \textit{shift} $C[n]$
is the word $\dots C_{n}\mid C_{n+1}\dots$ We say that a word $C$
is \textit{periodic} if it is a $\mathbb{Z}$-word and $C=C[n]$ for
some $n>0$. The minimal such $n$ is called the \textit{period}.
We extend the shift to $I$-words $C$ with $I\neq\mathbb{Z}$ by
defining $C[n]=C$.

\subsection*{Modules given by words} 
For any $I$-word $C$ and any $i\in I$
there is an associated vertex $v_{i}(C)$, the tail of $C_{i}$ or
the head of $C_{i+1}$, or $v$ for $C=1_{v,\epsilon}$. Given an
$I$-word $C$ let $M(C)$ be the $\Lambda$-module generated by the
elements $b_{i}$ subject to the relations 
\[
e_{v}b_{i}=\begin{cases}
b_{i} & (\mbox{if }v_{C}(i)=v)\\
0 & (\mbox{otherwise})
\end{cases}
\]
for any vertex $v$ in $Q$ and 
\[
xb_{i}=\begin{cases}
b_{i-1} & (\mbox{if }i-1\in I\mbox{ and }C_{i}=x)\\
b_{i+1} & (\mbox{if }i+1\in I\mbox{ and }C_{i+1}=x^{-1})\\
0 & (\mbox{otherwise})
\end{cases}
\]
for any arrow $x$ in $Q$. 
Given a periodic $\mathbb{Z}$-word $C$
of period $p$, and a $K[T,T^{-1}]$-module $V$, there is an automorphism
of the underlying vector space of $M(C)$ given by $b_{i}\mapsto b_{i-p}$. 
Hence $M(C)$ is a $\Lambda$-$K[T,T^{-1}]$-bimodule and we let $M(C,V)=M(C)\otimes_{K[T,T^{-1}]}V$.

By a \emph{string module} we mean a module of the form $M(C)$ where $C$ is not a periodic $\mathbb{Z}$-word. 
By a \emph{band module} we mean a module of the form $M(C,V)$ where $C$ is a periodic $\mathbb{Z}$-word and $V$ is an indecomposable $K[T,T^{-1}]$-module. 

\subsection*{Sign, heads and tails} 
(\cite[\S 2]{CB}) We choose a \textit{sign} $\epsilon=\pm1$
for each letter $l$, such that if distinct letters $l$ and $l'$
have the same head and sign, then $\{l,l'\}=\{x^{-1},y\}$ for some
zero relation $xy\in\rho$.

The head of a finite word or $\mathbb{N}$-word
$C$ is defined to be $v_{0}(C)$, so it is the head of $C_{1}$,
or $v$ for $C=1_{v,\epsilon}$. The sign of a finite word or $\mathbb{N}$-word
$C$ is defined to be that of $C_{1}$, or $\epsilon$ for $C=1_{v,\epsilon}$.

For $v$ a vertex and $\epsilon=\pm1$, we define $\mathcal{W}_{v,\epsilon}$
to be the set of all $I$-words with head $v$, sign $\epsilon$, and where $I\subseteq \mathbb{N}$.

\subsection*{Composing words} 
The composition $CD$ of a word $C$ and
a word $D$ is obtained by concatenating the sequences of letters,
provided that the tail of $C$ is equal to the head of $D$, the words
$C^{-1}$ and $D$ have opposite signs, and the result is a word.

By convention $1_{v,\epsilon}1_{v,\epsilon}=1_{v,\epsilon}$ and the
composition of a $-\mathbb{N}$-word $C$ and an $\mathbb{N}$-word
$D$ is indexed so that $CD=\dots C_{0}\mid D_{1}\dots$ If $C=C_{1}\dots C_{n}$
is a non-trivial finite word and all powers $C^{m}$ are words, we
write $C^{\infty}$ and $^{\infty}C^{\infty}$ for the $\mathbb{N}$-word
and periodic word $C_{1}\dots C_{n}C_{1}\dots C_{n}\dots$ and $\dots C_{n}\mid C_{1}\dots$
If $C$ is an $I$-word and $i\in I$, there are words $C_{>i}=C_{i+1}C_{i+2}\dots$
and $C_{\leq i}=\dots C_{i-1}C_{i}$ with appropriate conventions
if $i$ is maximal or minimal in $I$, such that $C=(C_{\leq i}C_{>i})[i]$.

\subsection*{Relations given by words} 
(\cite[\S 4]{CB}) If $M$ is a $\Lambda$-module
and $x$ is an arrow with head $v$ and tail $u$, then multiplication
by $x$ defines a linear map $e_{u}M\rightarrow e_{v}M$, and hence
a linear relation from $e_{u}M$ to $e_{v}M$.

By composing such relations
and their inverses, any finite word $C$ defines a linear relation
from $e_{u}M$ to $e_{v}M$, where $v$ is the head of $C$ and $u$
is the tail of $C$. We denote this relation also by $C$.

Thus, for
any subspace $U$ of $e_{u}M$, one obtains a subspace $CU$ of $e_{v}M$.
We write $C0$ for the case $U=\{0\}$ and $CM$ for the case $U=e_{u}M$.

\subsection*{Filtrations given by words} 
(\cite[\S 6]{CB}) For $C\in\mathcal{W}_{v,\epsilon}$ and any $\Lambda$-module $M$ define subspaces $C^{-}(M)\subseteq C^{+}(M)\subseteq e_{v}M$
as follows.

Suppose $C$ is finite. Let $C^{+}(M)=Cx^{-1}0$
 if there is an arrow $x$ such that $Cx^{-1}$ is a word, and otherwise
$C^{+}(M)=CM$. Similarly let $C^{-}(M)=CyM$ if there is an arrow $y$
such that $Cy$ is a word, and otherwise $C^{-}(M)=C0$.

If instead $C$ is an $\mathbb{N}$-word let $C^{+}(M)$ be the set of
$m\in M$ such that there is a sequence $m_{n}$ $(n\geq0)$ with $m_{0}=m$ and $m_{n-1}\in Cm_{n}$ for all $n\geq1$, and define
$C^{-}(M)$ to be the set of $m\in M$ such that there is a sequence
$m_{n}$ as above which is eventually zero.

\subsection*{Subgroups of finite definition}
(\cite[\S 1.1.1]{P}) A \textit{pp-definable subgroup} of $M$ is an additive subgroup of $M$ of the form
\[
\begin{array}{c}
\{m\in M\mid A\underline{m}= 0 \textnormal{ for some }  \underline{m}= \left( 
\begin{array}{c} m_0 \\ \vdots \\ m_{c-1}  \end{array} \right)
\in M^{c} \textnormal{ with } m=m_{0} \} \end{array}
\]
where  $r,c\geq 1$ and $A=(a_{ij})$ is a matrix in $\mathbb{M}_{r,c}(\Lambda)$. If $r=c=1$ and $A=a$ this gives
$\{m\in M\mid am= 0 \}$. If $r=1$, $c=2$, and $A=(\begin{array}{cc} -1 & a \end{array})$ this gives
$\{m\in M\mid \exists m'\in M \textnormal { such that } m= am' \}$. If $C$ is a finite word then $CM$ is a pp-definable subgroup of $M$ (see \cite[\S 5.3.2]{Har}, \cite[Example 1.1.2]{P} or \cite[\S 4]{PP2}).

\begin{lem}
\label{lem:CMisppdef}If $M$ is a pure-injective $\Lambda$-module and $C$ is
an $\mathbb{N}$-word then $C^{+}(M)=\bigcap_{n\geq0}C_{\leq n}M$.
\end{lem}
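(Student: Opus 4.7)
The plan is to prove the two inclusions separately. The forward inclusion $C^{+}(M) \subseteq \bigcap_{n \geq 0} C_{\leq n}M$ is immediate from unpacking the definitions: any infinite witness $(m_{k})_{k \geq 0}$ for $m \in C^{+}(M)$ has each initial segment $m_{0}, m_{1}, \dots, m_{n}$ witnessing $m \in C_{\leq n}M$ in the finite-word sense, so truncation suffices. Note that each $C_{\leq n}M$ is a pp-definable subgroup of $M$ by the remark immediately preceding the lemma, which is what makes the hypothesis $m \in \bigcap_{n \geq 0} C_{\leq n}M$ tractable by model-theoretic compactness.

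For the reverse inclusion, the strategy is to invoke the algebraic compactness of the pure-injective module $M$: a system of $\Lambda$-linear equations in (possibly infinitely many) variables with constants drawn from $M$, which is finitely solvable in $M$, is solvable in $M$ (see, e.g., \cite[\S 4.2]{P}). I would introduce variables $(x_{k})_{k \geq 1}$, treat $m$ as the constant $x_{0}$, and write one equation $\epsilon_{n}$ for each $n \geq 1$ encoding the single-letter relation $m_{n-1} \in C_{n}m_{n}$: if $C_{n}$ is an arrow $y$ then $\epsilon_{n}$ reads $x_{n-1} = y x_{n}$, whereas if $C_{n} = y^{-1}$ for some arrow $y$ then $\epsilon_{n}$ reads $y x_{n-1} = x_{n}$; one adds the idempotent conditions $e_{v_{k}(C)} x_{k} = x_{k}$ to record the correct vertex at each position.

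By the hypothesis $m \in C_{\leq N}M$ for every $N \geq 1$, the initial block $\epsilon_{1}, \dots, \epsilon_{N}$ is solvable by some $m_{1}, \dots, m_{N} \in M$ with $m_{0} = m$; since every finite subset of the full system is contained in such a block (any unused variables being set to $0$), the whole system is finitely solvable. Algebraic compactness then produces a coherent global solution $(m_{k})_{k \geq 1}$ in $M$, and together with $m_{0} = m$ this is precisely the infinite sequence demanded by the definition of $C^{+}(M)$. The main point requiring care is bookkeeping rather than depth: one must faithfully render the relation-theoretic condition $m_{n-1} \in C_{n}m_{n}$ as a genuine $\Lambda$-linear equation, paying attention to the direction of the letter $C_{n}$ and to the vertex idempotents attached to each $m_{k}$, after which the argument collapses to a formal compactness invocation.
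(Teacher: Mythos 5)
Your argument is correct, and it uses pure-injectivity in a genuinely different (though equivalent) form from the paper. You write down the entire witnessing sequence at once as an infinite system of $\Lambda$-linear equations in the unknowns $x_k$ (each equation involving only finitely many unknowns), check finite solvability directly from $m\in C_{\leq N}M$ for all $N$, and then apply the equational characterisation of algebraic compactness: every finitely solvable system with constants in $M$, in possibly infinitely many unknowns, is solvable in $M$ (see \cite[\S 4.2]{P}); your bookkeeping with the direction of each letter and the vertex idempotents is exactly what is needed to turn $m_{n-1}\in C_n m_n$ into honest linear equations, so the global solution is precisely a witness for $m\in C^+(M)$. The paper instead builds the sequence inductively, one element at a time: having secured $m_{i-1}\in\bigcap_{n\geq i}(C_{\geq i})_{\leq n}M$, it intersects the coset $C_i^{-1}m_{i-1}$ with the pp-definable subgroups $(C_{>i})_{\leq n}M$ and uses the coset-intersection (finite intersection property) form of algebraic compactness to choose $m_i$ again lying in all deeper truncations -- that invariant is what keeps the induction alive, since a naive choice with only $m_{i-1}\in C_i m_i$ could get stuck. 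Your one-shot formulation sidesteps the need for any such invariant, at the cost of invoking the full infinitely-many-variables version of algebraic compactness rather than just countable intersections of cosets in a single variable; both versions are standard equivalents of pure-injectivity, so either route is legitimate, and the easy inclusion $C^+(M)\subseteq\bigcap_{n\geq 0}C_{\leq n}M$ is handled the same way in both.
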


\begin{proof}
Clearly $C^{+}(M)\subseteq\bigcap_{n\geq0}C_{\leq n}M$
so it suffices to pick $m\in M$ such that $m\in C_{\leq n}M$
for all $n\geq0$ and show $m\in C^{+}(M)$. Suppose, for an arbitrary but fixed $i>0$, we can choose
$m_{i-1}\in\bigcap_{n\geq i}(C_{\geq i})_{\leq n}M$. For $n>i$ the
set $\Delta_{n}=C_{i}^{-1}m_{i-1}\cap(C_{>i})_{\leq n}M$
is a non-empty coset of a pp-definable subgroup. We have
$\bigcap_{s\in S}\Delta_{s}=\Delta_{\text{max\,}S}\neq\emptyset$ for any finite subset $S$ of $\{n\in\mathbb{N}\mid n>i\}$,
so as $M$ is algebraically compact there exists $m_{i}\in\bigcap_{n>i}(C_{>i})_{\leq n}M$
such that $m_{i-1}\in C_{i}m_{i}$ (see \cite[\S 4.2.1]{P}). Setting $m_{0}=m$ gives the required sequence $m_{0},m_{1},m_{2},\dots\in M$.
\end{proof}

\subsection*{Refined functors}
(\cite[\S 7]{CB}) If $(B,D)\in\mathcal{W}_{v,1}\times\mathcal{W}_{v,-1}$ and $M$ is a $\Lambda$-module, let $F_{B,D}(M)=F_{B,D}^{+}(M)/F_{B,D}^{-}(M)$ where 
\[
\begin{array}{c}
F_{B,D}^{+}(M)=B^{+}(M)\cap D^{+}(M)\text{ and }\\
F_{B,D}^{-}(M)=(B^{+}(M)\cap D^{-}(M))+(B^{-}(M)\cap D^{+}(M)).
\end{array}
\]
If $(B,D)\in\mathcal{W}_{v,1}\times\mathcal{W}_{v,-1}$
and $C=B^{-1}D$ is a periodic word, say $D=E^{\infty}$ and $B=(E^{-1})^{\infty}$ for some finite word $E$, then $F_{B,D}^{+}(M)=E^{\sharp}$, $F_{B,D}^{+}(M)=E^{\flat}$ and the linear relation $E$ on $e_{v}M$ induces an automorphism of $F_{B,D}(M)$ (see~\S1). Hence $F_{B,D}$ defines a functor from $\Lambda$-modules to $K[T,T^{-1}]$-modules. Otherwise $C$ is a non-periodic word and we consider $F_{B,D}$ as a functor from the category of $\Lambda$-modules to $K$-vector spaces. 

In general there is a natural isomorphism between $F_{B,D}$ and the functor $G_{B,D}$ defined by $G_{B,D}(M)=G_{B,D}^{+}(M)/G_{B,D}^{-}(M)$ for any $\Lambda$-module $M$ where $G_{B,D}^{\pm}(M)=B^{+}(M)+D^{\pm}(M)\cap B^{-}(M)$.

\begin{cor}
\label{cor:pressurj}
Let $\theta:N\rightarrow M$ be a homomorphism of $\Lambda$-modules where is pure-injective $\Lambda$.
If $F_{B,D}(\theta)$ is surjective for all $(B,D)\in\bigcup_{v}\mathcal{W}_{v,1}\times\mathcal{W}_{v,-1}$
then $\theta$ is surjective. 
\end{cor}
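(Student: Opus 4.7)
The plan is a direct construction by induction. Given $m\in M$ we may assume $m\in e_vM$ for some vertex $v$. Set $(B^{(0)},D^{(0)})=(1_{v,+1},1_{v,-1})$; since $F^+_{B^{(0)},D^{(0)}}(M)$ contains $m$, the surjectivity hypothesis yields $n_0\in N$ with $m-\theta(n_0)\in F^-_{B^{(0)},D^{(0)}}(M)$. We iterate: at stage $k$ we maintain finite words $B^{(k)}\in\mathcal{W}_{v,+1}$ and $D^{(k)}\in\mathcal{W}_{v,-1}$ together with an approximation $n_k\in N$ such that $m-\theta(n_k)$ lies in $F^-_{B^{(k)},D^{(k)}}(M)$. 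To pass to stage $k+1$, decompose $m-\theta(n_k)=a_k+b_k$ with $a_k\in B^{(k),+}(M)\cap D^{(k),-}(M)$ and $b_k\in B^{(k),-}(M)\cap D^{(k),+}(M)$. The string-algebra axioms force that when a letter-extension is possible there are essentially unique arrows $x,y$ with $B^{(k)}x$ and $D^{(k)}y$ words; reading them off from $b_k$ and $a_k$, we extend to $B^{(k+1)}=B^{(k)}x$ and $D^{(k+1)}=D^{(k)}y$, and apply the surjectivity hypothesis for the refined pair to obtain $n_{k+1}$.

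If the process terminates at some finite stage, because no further arrow-extension is permitted on either side, then $F^-_{B^{(k)},D^{(k)}}(M)=0$ at that stage and we conclude $m=\theta(n_k)$, as desired. Otherwise the recursion produces $\mathbb{N}$-words $B$ and $D$, and the residuals $m-\theta(n_k)$ lie in the decreasing chain of pp-definable subgroups $B^{(k)}_{\leq k}M\cap D^{(k)}_{\leq k}M$. By Lemma~\ref{lem:CMisppdef}, the intersection of this chain equals $B^+(M)\cap D^+(M)=F^+_{B,D}(M)$. The remaining task is to realise $m$ exactly in $\theta(N)$: using the pure-injectivity of $M$, the algebraic-compactness argument familiar from Lemma~\ref{lem:CMisppdef} is applied to the finitely solvable system of pp-conditions encoding ``$m-\theta(n)$ lies in the $k$th pp-subgroup'' (as $n$ ranges over $N$), which, combined with a final invocation of the surjectivity of $F_{B,D}(\theta)$ on the limit pair $(B,D)$, produces $n\in N$ with $\theta(n)=m$.

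The principal obstacle will be the coordinated management of the two words $B$ and $D$ and the coherent choice of approximations $n_k$. At each stage the decomposition $m-\theta(n_k)=a_k+b_k$ must align with the admissible arrow-extensions dictated by the string-algebra axioms, and the improvements $n_{k+1}$ must remain close enough to $n_k$ for the $\theta(n_k)$ to converge within the pp-definable chain on the $M$-side. The pure-injectivity of $M$, in the form of Lemma~\ref{lem:CMisppdef}, is the essential ingredient that glues the finite-stage approximations into a genuine preimage, paralleling the role of algebraic-compactness in the proof of that lemma.
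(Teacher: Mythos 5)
There is a genuine gap --- in fact the proposed scheme breaks at three separate points. First, the induction does not start and does not propagate: for the trivial pair $(1_{v,1},1_{v,-1})$ the space $F^{+}_{B,D}(M)=B^{+}(M)\cap D^{+}(M)$ is in general a \emph{proper} subspace of $e_vM$ (when an inverse letter can be appended, $C^{+}(M)=Cx^{-1}0$ is a kernel condition), so an arbitrary $m\in e_vM$ need not lie in it and the surjectivity of $F_{B,D}(\theta)$ gives you nothing about $m$. The same problem recurs at every stage: knowing $a_k\in D^{(k)}yM=D^{(k),-}(M)$ does not place $a_k$ in $(D^{(k)}y)^{+}(M)$, so the residual has no reason to lie in $F^{+}$ of the extended pair, and the greedy letter-by-letter extension has no reason to produce the ``right'' words. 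Locating a pair $(B,D)$ adapted to a given element is exactly the delicate content of the separation lemmas of \cite{CB} (Lemmas 10.3 and 10.5), which use maximal choices in the total order on $\mathcal{W}_{v,\epsilon}$, not a greedy recursion. Second, the termination claim is false: if no arrow extension of $B^{(k)}$ exists then $B^{(k),-}(M)=B^{(k)}0$, which is typically nonzero (it contains kernels of multiplication by arrows occurring inversely in $B^{(k)}$), so $F^{-}_{B^{(k)},D^{(k)}}(M)=0$ does not follow, and in any case ``residual in $F^{-}$'' is not ``residual in $\theta(N)$''.

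Third, and most seriously, the limit step misuses algebraic compactness. Pure-injectivity of $M$ lets you solve finitely solvable systems of pp-conditions \emph{with unknowns in $M$} (this is how Lemma~\ref{lem:CMisppdef} works), but your system has the unknown $n$ ranging over $N$, which is not assumed pure-injective, and the subgroup $\theta(N)$ of $M$ is not pp-definable; so no compactness argument available here produces an exact preimage of $m$. This is precisely why the paper argues the contrapositive instead of constructing preimages: assuming $\Ima(\theta)\neq M$, one takes the coset $S=e_v\Ima(\theta)+m$, which contains $m$ but not $0$, uses Lemma~\ref{lem:CMisppdef} together with \cite[Lemma 10.3]{CB} to find a word $B$ with $S$ meeting $B^{+}(M)$ but not $B^{-}(M)$, refines via \cite[Lemma 10.5]{CB} to a pair $(B,D)$ with $S$ meeting $G^{+}_{B,D}(M)$ but not $G^{-}_{B,D}(M)$, and then, as in \cite[Lemma 10.6]{CB}, concludes that $G_{B,D}(\theta)\cong F_{B,D}(\theta)$ is not surjective. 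Pure-injectivity of $M$ enters only through Lemma~\ref{lem:CMisppdef}, to guarantee $B^{+}(M)=\bigcap_n B_{\leq n}M$ for $\mathbb{N}$-words so that the maximal choice of $B$ exists --- not to glue approximate preimages, which is the step your argument cannot carry out.
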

\begin{proof}
For the contrapositive we suppose $\text{im}(\theta)\neq M$, and
so we can choose a vertex $v$ and some element $m\in e_{v}M\setminus e_{v}\text{im}(\theta)$.
The set $S=e_{v}\text{im}(\theta)+m$ contains $m$ but
not $0$, so by combining lemma \ref{lem:CMisppdef} (ii) and \cite[Lemma 10.3]{CB},
there is a word $B\in\mathcal{W}_{v,\epsilon}$ such that $S$ meets
$B^{+}(M)$ but not $B^{-}(M)$. Following the proof of \cite[Lemma 10.5]{CB}
we have that $S$ meets $G_{B,D}^{+}(M)$ but
not $G_{B,D}^{-}(M)$ for some $(B,D)\in\mathcal{W}_{v,1}\times\mathcal{W}_{v,-1}$.
Following the second half of the proof of \cite[Lemma 10.6]{CB},
this shows $G_{B,D}(\theta)$ is not surjective. 
\end{proof}

\begin{proof}[Proof of Theorem \ref{t:mainstringtheorem}]
We show that every $\Sigma$-pure-injective $\Lambda$-module $M$ is a direct sum of string modules $M(C)$
and band modules $M(C,V)$ with $V$ $\Sigma$-pure-injective.

If $(B,D)\in\mathcal{W}_{v,1}\times\mathcal{W}_{v,-1}$
and $C=B^{-1}D$ is periodic, say $D=E^{\infty}$ and $B=(E^{-1})^{\infty}$ for some finite word $E$, then $(e_{v}M,E)$ is split by Corollary \ref{c:maincor}.
Following the proof of \cite[Theorem 9.2]{CB}, this means there is a homomorphism $\theta:N\rightarrow M$ where $N$ is a direct sum of string and band modules, and $F_{B,D}(\theta)$ is an isomorphism for all pairs of words $(B,D)\in\mathcal{W}_{v,1}\times\mathcal{W}_{v,-1}$
such that $C=B^{-1}D$ is a word. By \cite[Lemma 9.4]{CB} this means $\theta$ is injective, and $\theta$ is surjective by Corollary \ref{cor:pressurj}. 
\end{proof}

Note that any $\Sigma$-pure-injective module is a direct sum of indecomposables, 
but conversely not every direct sum of
indecomposable $\Sigma$-pure-injective modules is $\Sigma$-pure-injective,
see for example \cite[Example 4.4.18]{P}.

Ringel has shown that $M(C)$ is $\Sigma$-pure-injective provided $C$ is a so-called
\emph{contracting} word \cite[\S 5]{R3}. A more general result is due to Harland \cite{Har}.

\subsection*{Harland's criterion}
For each vertex $v$ and each $\epsilon \in \{ \pm 1 \} $ there is a 
total ordering $<$ on $\mathcal{W}_{v,\epsilon}$ given by $C<C'$ if
\begin{itemize}
\item[(a)] $C=ByD$ and $C'=Bx^{-1}D'$ for arrows $x$ and $y$ and words $B$, $D$, and $D'$ (with $B$ finite),
\item[(b)] $C'$ is finite and $C=C'yD$ for an arrow $y$ and a word $D$, or
\item[(c)] $C$ is finite and $C' = Cx^{-1}D$ for an arrow $x$ and a word $D$.
\end{itemize}

For any $I$-word $C$ and any $i\in I$ the words $C_{>i}$ and $(C_{\leq i})^{-1}$ have 
the same head but opposite signs. Let $C(i,\pm 1)$ be the one with sign $\pm 1$. 
The following result is \cite[Proposition 14 and Theorem 42]{Har}. 
(Note that Harland uses the opposite ordering on $\mathcal{W}_{v,\epsilon}$ 
so has the ascending chain condition.)

\begin{pro}
Let $\Lambda$ be  finite dimensional and $C$ be an $I$-word. 
Then $M(C)$ is $\Sigma$-pure-injective
if and only if for each vertex $v$ and each $\epsilon \in \{ \pm 1 \}$ 
every descending chain in $\{ C(i, \epsilon) : i\in I, v_{i}(C)=v \}$ stabilizes.
\end{pro}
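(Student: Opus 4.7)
The plan is to use the standard characterization that a module $M$ is $\Sigma$-pure-injective if and only if the lattice of pp-definable subgroups of $M$ satisfies the descending chain condition (see \cite[\S 4.4.2]{P}). The task then reduces to translating this lattice-theoretic condition on $M(C)$ into the combinatorial chain condition on the sets $\{ C(i,\epsilon) : i \in I,\ v_i(C)=v \}$ for each vertex $v$ and each sign $\epsilon$.

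First I would make the pp-definable subgroup lattice of $M(C)$ concrete. Each basis element $b_i$ lies in $e_{v_i(C)} M(C)$ and is canonically encoded by the pair $(C(i,1), C(i,-1))$ recording how the string extends from position $i$ in each direction. For any finite word $B \in \mathcal{W}_{v,\epsilon}$, the subgroups $B^+(M(C))$ and $B^-(M(C))$ defined in \S\ref{s:string} are pp-definable (this is the content of Lemma \ref{lem:CMisppdef} in the pure-injective setting), and in $e_v M(C)$ they are spanned by those $b_i$ whose word $C(i,\epsilon)$ ``begins with'' $B$ in a precise sense matching the clauses (a)--(c) of the ordering on $\mathcal{W}_{v,\epsilon}$: extending $B$ by a direct arrow moves us down in the order by clause (b), while extending by an inverse arrow moves us up by clause (c), with clause (a) handling the first disagreement between two words.

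Next I would establish that these word-indexed subgroups are cofinal in the full pp-subgroup lattice of $M(C)$, so that d.c.c.\ on the latter is equivalent to d.c.c.\ on the former. This is the crux step: one shows that every pp-formula in one variable, evaluated on $M(C)$, picks out a subspace whose behaviour on the basis $\{b_i\}$ is determined by comparing each $C(i,\epsilon)$ against finitely many fixed finite words. The pp-elimination uses the finite-dimensionality of $\Lambda$ to bound the relevant window size and the string algebra axioms (so that arrows act locally and unambiguously) in the same spirit as the arguments behind \cite[Lemma 10.3]{CB} and Corollary \ref{cor:pressurj} above. Granted cofinality, a strictly descending chain of pp-subgroups in $M(C)$ yields, by tracking which basis elements are separated at each step, a strictly descending chain of comparison words in some $\mathcal{W}_{v,\epsilon}$ that actually occur as $C(i,\epsilon)$; conversely, any strictly descending chain $C(i_1,\epsilon) > C(i_2,\epsilon) > \cdots$ produces, via its growing common initial segments with the later terms, a strictly descending chain of pp-subgroups separating $b_{i_1}, b_{i_2}, \ldots$.

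The main obstacle will be the cofinality step. Making precise that every pp-formula ``factors through'' finite windows of the string requires an inductive analysis of arbitrary pp-formulas, handling existentials one arrow at a time and keeping track of how the two halves $C(i,1)$ and $C(i,-1)$ interact at the same position; the string algebra hypothesis on $\Lambda$ is exactly what allows each existential to be witnessed within a bounded neighbourhood of each $b_i$. Periodic $C$ would also need separate handling, since then infinitely many positions $i$ share the same $(C(i,1),C(i,-1))$ up to shift and must not contribute a spurious infinite strict descent in $\mathcal{W}_{v,\epsilon}$.
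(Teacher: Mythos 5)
There is a genuine gap. Note first that the paper does not prove this proposition at all: it is quoted verbatim from Harland's thesis (\cite[Proposition 14 and Theorem 42]{Har}), so the burden of your proposal is to actually supply what Harland proves. Your reduction to the descending chain condition on pp-definable subgroups is the standard and correct starting point, and your description of how $B^{\pm}(M(C))$ for finite words $B$ sit inside $e_vM(C)$ as spans of basis vectors is fine. But the step you yourself flag as ``the crux'' --- that these word-indexed subgroups are cofinal in the whole lattice of pp-definable subgroups of $M(C)$, i.e.\ that every pp-formula in one variable evaluated on $M(C)$ is controlled by comparison with finitely many finite words --- is precisely the substantial content of Harland's Theorem 42 (and of the related analysis in \cite{PP2}), and your text only asserts it and names the obstacles (inductive elimination of existentials, bounded windows, interaction of $C(i,1)$ and $C(i,-1)$) without carrying any of this out. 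Nothing in the present paper (Lemma~\ref{lem:CMisppdef}, \cite[Lemma 10.3]{CB}, Corollary~\ref{cor:pressurj}) does this for you: those results concern the subgroups $B^{\pm}(M)$ and $CM$ themselves, not arbitrary pp-formulas, so the proposal as written is an outline with its central theorem missing.

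Two further points need repair even in the direction you treat as easy. A strictly descending chain $C(i_1,\epsilon)>C(i_2,\epsilon)>\cdots$ does not in general have ``growing common initial segments'' with later terms: in the order defined by clauses (a)--(c) the common prefixes of consecutive terms can stay bounded, so to produce a strictly descending chain of pp-subgroups you must take the finite words witnessing each individual comparison and intersect the corresponding subgroups cumulatively (and check strictness by evaluating on the elements $b_{i_n}$), rather than rely on prefixes stabilizing. Secondly, the DCC hypothesis is on the set $\{C(i,\epsilon): i\in I,\ v_i(C)=v\}$ of pointed words actually occurring in $C$, not on all of $\mathcal{W}_{v,\epsilon}$; your cofinality argument has to be organized so that only occurring words matter, and the periodic case (where this set is finite but infinitely many indices share the same pointed word) is not a side remark but a case your intended correspondence must be shown to handle.
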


On page 243 of \cite[\S 6.9]{Har} there is an example of an aperiodic word $C$ where $M(C)$ is pure-injective.

\medskip
\textit{Acknowledgement.}
The first listed author is grateful to Rosanna Laking for many helpful conversations 
about string algebras and the model theory of modules.

\frenchspacing

\end{document}